\newtheorem{thm}{Theorem}[section]
 \newtheorem{cor}[thm]{Corollary}
 \newtheorem{lem}[thm]{Lemma}
 \newtheorem{prop}[thm]{Proposition}
 \theoremstyle{definition}
 \newtheorem{defn}[thm]{Definition}
 \theoremstyle{remark}
 \newtheorem{rem}[thm]{Remark}
 \numberwithin{equation}{section}
 \numberwithin{thm}{section}
\DeclareMathOperator*{\esssup}{ess\,sup}
\title[Decomposable operators acting between distinct $L^p$-direct integrals]{Decomposable operators acting between distinct $L^p$-direct integrals of Banach spaces 
}
\author{Nikita Evseev}
	\address{Institute for Advanced Study in Mathematics, Harbin Institute of Technology, 150006 Harbin, China.
              Sobolev Institute of Mathematics, 4 Academic Koptyug avenue,  630090 Novosibirsk, Russia.
	}
	\email{evseev@math.nsc.ru}
\thanks{The first named author was partly supported by Russian Foundation for Basic Research (project no. 20-01-00661).
The second named author was supported by the grant GA\v{C}R 20-19018Y}
\author{Alexander Menovschikov}
	\address{Department of Mathematics, University of Hradec Kr\'alov\'e, Rokitansk\'eho 62, 500 03 Hradec Kr\'alov\'e, Czech Republic.
              Faculty of Economics, University of South Bohemia, Studentsk\'a 13, 370 05 \v{C}esk\'e Bud\v{e}jovice, Czech Republic.
			  	}
	\email{alexander.menovschikov@uhk.cz}
\begin{document}

\maketitle

\begin{abstract}
The notion of decomposable operators acting between distinct $L^p$-direct integrals of Banach spaces is introduced. 
We show that these operators generalize the composition operator, in sense that a mapping is replaced by a binary relation.
The necessary and sufficient conditions for the boundedness of those operators are the main results of the paper.\\
Keywords: Direct integral; $L^p$-direct integral; composition operator; decomposable operators; mixed-norm Lebesgue space
\end{abstract}

\section{Introduction}

In this article we study a particular class of bounded operators on $L^p$-direct integrals of measurable families of Banach spaces. 
More precisely, we extend the well-known concept of decomposable operators, acting from a direct integral into themselves, to the case when such an operator transforms one direct integral into another that is substantially different from the original one. The idea and our initial motivation for such generalization is closely related to the study of the composition operator. In this sense, we brunch out from the classical field of application of decomposable operators and show their connection with the composition and multiplication operators on Banach spaces and, as an application of the results obtained, we prove theorems on the boundedness of the composition operator on mixed-norm Lebesgue and Sobolev spaces.

The concept of direct integral of Banach spaces (as generalization of a direct sum) was first introduced in von Neumann’s papers in the early 1950s and used to classify von Neumann algebras \cite{N49}.
The main purpose was closely connected to the formalization of quantum mechanics. 
Initially, the theory considered the direct integrals of Hilbert spaces (see \cite{W70,Az74}),
which provides finer analytical tools,
for example the direct integral version of the spectral theorem (\cite{G96}, \cite[Theorem 7.1]{H13}).
The direct integrals play an important role in the theory of group representations (\cite{N80,NS82,JR17}). 
In addition, various issues of functional analysis are actively studied  
(e.g. \cite{Schwartz1967,DNSZ2016,Ha17-arxiv,Ab2020}).  
Other fields of applications are processes on porous medium \cite{SW91,MB08,RLMV2021} and evolutionary problems 
\cite{ACDE2021,EM2020jmaa}. In both fields the direct integrals provide a natural mathematical description for varying structures.


\subsection{Definitions and Objectives}
Let $(\Omega, \Sigma_\Omega, \mu)$ be a measure space and $\{B_\omega\}_{\omega\in\Omega}$ be a measurable family of Banach spaces. 
Then the direct integral $\int_{\Omega}^{\oplus}B_\omega\,d\mu$ is a set of all measurable  functions $f:\Omega\to\cup B_\omega$, such that $f(\omega)\in B_\omega$. 
The $L^p$-direct integral $\left(\int_{\Omega}^{\oplus}B_\omega\,d\mu\right)_{L^p}$ is a set of all measurable functions such that $\int_\Omega\|f(\omega)\|^p_\omega\, d\mu$ is finite.


One of the most natural classes of operators on those spaces is the so-called decomposable operators. 
A decomposable operator on a $L^p$-direct integral is a map $\omega\mapsto\mathcal L(B_\omega)$. This means that the action of such an operator can be represented as a pointwise action of the family of linear operators: $P[f](\omega) = P(\omega)[f(\omega)]$ for every section $f \in \int_{\Omega}^{\oplus}B_\omega\,d\mu$.

Our first main results (Theorem \ref{theorem:MultiplicationOperator} and Theorem \ref{theorem:operatorQ}) are aimed to describe bounded decomposable operators of the form
\begin{equation}\label{operatorQ}
    P \colon \left(\int_{\Omega}^{\oplus}B_\omega\,d\mu\right)_{L^p} \to \left(\int_{\Omega}^{\oplus}D_\omega\,d\nu\right)_{L^q},
\end{equation}
where $\{D_\omega\}_{\omega\in\Omega}$ is another measurable family on $(\Omega, \Sigma_\Omega, \nu)$.

Developing this idea, we come to the main object of this article. We give a meaningful description of decomposable operators acting between different $L^p$-direct integrals:
\begin{equation}\label{operatorM}
  M : \left(\int_{T}^{\oplus}W_t\,d\mu\right)_{L^p} \to \left(\int_{S}^{\oplus}V_s\,d\lambda\right)_{L^q}.    
\end{equation}
To study operators \eqref{operatorM} properly, 
we have to introduce an auxiliary construction, which is based on the following observation.     
Consider a composition operator $C_\varphi:L^p(\Omega', \mu) \to L^q(\Omega, \nu)$ which is induced by a mapping $\varphi:\Omega\to\Omega'$ 
and defined by the rule $C_\varphi f = f\circ\varphi$. 
This operator can be split into two operators: the first acts onto functions defined on graph $\Gamma_\varphi$, and the second is an isomorphism, induced by projection from the graph to $\Omega$.    
\begin{figure}[h]
\centering	
\includegraphics[width=0.45\linewidth]{./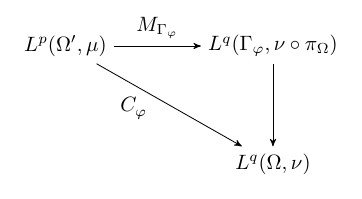}
\caption{Splitting the composition operator}\label{fig:fig1}
\end{figure}
It turned out that the boundedness of the composition operator $C_\varphi$ is equivalent to the boundedness of operator $M_{\Gamma_\varphi}$  (see Fig. \ref{fig:fig1}).
This connection is one of the main motivations of our research. It gives us a new view for studying composition operators on function spaces and allows us to consider a binary correspondence instead of mapping. Moreover, the construction of an operator acting on a direct integral defined over a Cartesian product (now, instead of $\Gamma_\varphi$ we consider an arbitrary set $F \subset S \times T$) is interesting from the point of view of the theory of decomposable operators in its classical sense.
 
Finally, we apply the developed methods for describing composition operator on   
mixed-norm Lebesgue and Sobolev spaces
\begin{equation}
C_\varphi:L^{p,\beta}(U') \to L^{q,\alpha}(U)    
\end{equation}
and 
\begin{equation}
C_{\varphi}:L^p(T, W^{1,\beta}(U'_t)) \to L^q(S, W^{1,\alpha}(U_s)).
\end{equation}

\subsection{Structure of the paper}
In Section \ref{section:Preliminaries}, we give a short introduction to the theory of the direct integral, as well as the definitions of the main objects of our research (the concept of a lattice supremum, which is used to obtain a measurable analog of the norm of an operator; K\"othe spaces, which provide a more general scalar characteristic for the spaces under study than $L^p$-spaces; spaces of a homogeneous type, which are both a suitable basis for constructing a direct integral, and spaces that still have enough material for the use of analytical tools).
In Section \ref{section:Decomposable}, we pass from classical definition of decomposable operators to a more general description by using the notion of measurable family of bounded linear operators.
Further, in Section \ref{section:MFoperators}, based on the observations from the previous section, the main objects is defined. 
We establish the conditions for the boundedness of decomposable operators acting between different direct integrals.  
In Section \ref{section:WeightedCO}, we show a relationship with the classical composition operator and implement the scheme that was presented on Fig. \ref{fig:fig1}. 
In conclusion, in Section \ref{section:mixed}, we give two examples that, on the one hand, demonstrate the action of such operators, and, on the other hand, 
establish their relation with the porous medium models. 





\section{Preliminaries}\label{section:Preliminaries}

\subsection{K\"othe function Spaces}

As a scalar norm on a direct integral, the norm in the K\"othe space is traditionally considered. It is suitable for describing general questions, but when we turn to describing decomposable operators acting between different direct integrals, we are forced to limit ourselves to using a more specific norm, namely the $L^p$-norm. 
Therefore, all the results below Section \ref{section:Decomposable} are formulated for $L^p$-spaces.

\begin{defn}
Given a measure space $(\Omega, \Sigma_\Omega, \mu)$. A \textit{K\"othe function space} is a Banach space $(K, \|\cdot\|_K)$ of real-valued measurable functions on $\Omega$ modulo equality almost everywhere 
such that  
\begin{enumerate}[label=(\roman*)]
\item $\chi_A\in K(\Omega)$ for every measurable $A$ with $|A|<\infty$;
\item every $f\in K(\Omega)$ is integrable over measurable set $A$ whenever $|A|<\infty$; 
\item if $g$ is measurable and $f\in K(\Omega)$ such that $|g(x)|\leq |f(x)|$ a.e. then $g\in K(\Omega)$ and $\|g\|_{K(\Omega)}\leq\|f\|_{K(\Omega)}$.
\end{enumerate}
\end{defn}

Any Banach function space is an example of the above notion, 
in particular, 
the Lebesgue space $L^p(\Omega)$, $1\leq p\leq \infty$. 
As a reference we suggest the book by P.K. Lin \cite{Lin2004}.

We will need the following property.
\begin{prop}[{\cite[Lemma IV.3.2, p. 97]{Kantorovich1982}}]\label{prop:strong-ae}
If a sequence $\{f_n\}$ converges to $f$ in $K(\Omega)$, then there exists a subsequence $\{f_{n_k}\}$ 
that converges to $f$ a.e.
\end{prop}

A K\"othe function space $K(\Omega)$ has \textit{the Fatou property} if for any 
increasing nonnegative sequence $\{f_n\}$ bounded in norm $\|\cdot\|_K$,
its limit $f(\omega)=\lim\limits_{n\to\infty}f_n(\omega)$ belongs to $K(\Omega)$
and $\|f\|_K = \lim\limits_{n\to\infty}\|f_n\|_K$.

\subsection{$L^p$-direct integrals of Banach spaces}
Here we provide a brief account of the direct integral theory (also see \cite{N80,HLR91,JR17}). 
Let $(\Omega, \Sigma_\Omega, \mu)$ be a measure space. 
We will omit the symbol of $\sigma$-algebra and will use a notation $(\Omega, \mu)$, so that it cannot cause ambiguity. Let $V$ be a vector space.
A family of semi-norms $\{\|\cdot\|_\omega\}_{\omega \in \Omega}$ on $V$ is said to be 
a \textit{measurable family of semi-norms}, if the function $\omega \mapsto \|v\|_\omega$ is measurable for each $v\in V$. 
We define Banach spaces $B_\omega$ to be a completion $V/\ker(\|\cdot\|_\omega)$ with respect to semi-norms $\|\cdot\|_\omega$.
Such a family  $\{B_\omega\}_{\omega \in \Omega}$ is called a \textit{measurable family of Banach spaces} over $(\Omega, \mu, V)$.

A mapping $f:\Omega\to\bigcup_{\omega\in \Omega}B_\omega$ is said to be a \textit{section} of family $\{B_\omega\}_{\omega\in \Omega}$, if $f(\omega)\in B_\omega$ for all $\omega \in \Omega$.
A \textit{simple section} is a section $h$ for which there exist $n\in\mathbb N$, $v_1, \dots, v_n\in V$ and measurable sets $A_1, \dots, A_n\subset \Omega$ such that 
$f(\omega) = \sum_{k=1}^n\chi_{A_k}\cdot v_k$ for all $\omega\in \Omega$. Further, we shall also call sections just functions 
(see Fig \ref{fig:1}).

\begin{figure}[h]
\centering	
\includegraphics[width=0.8\linewidth]{./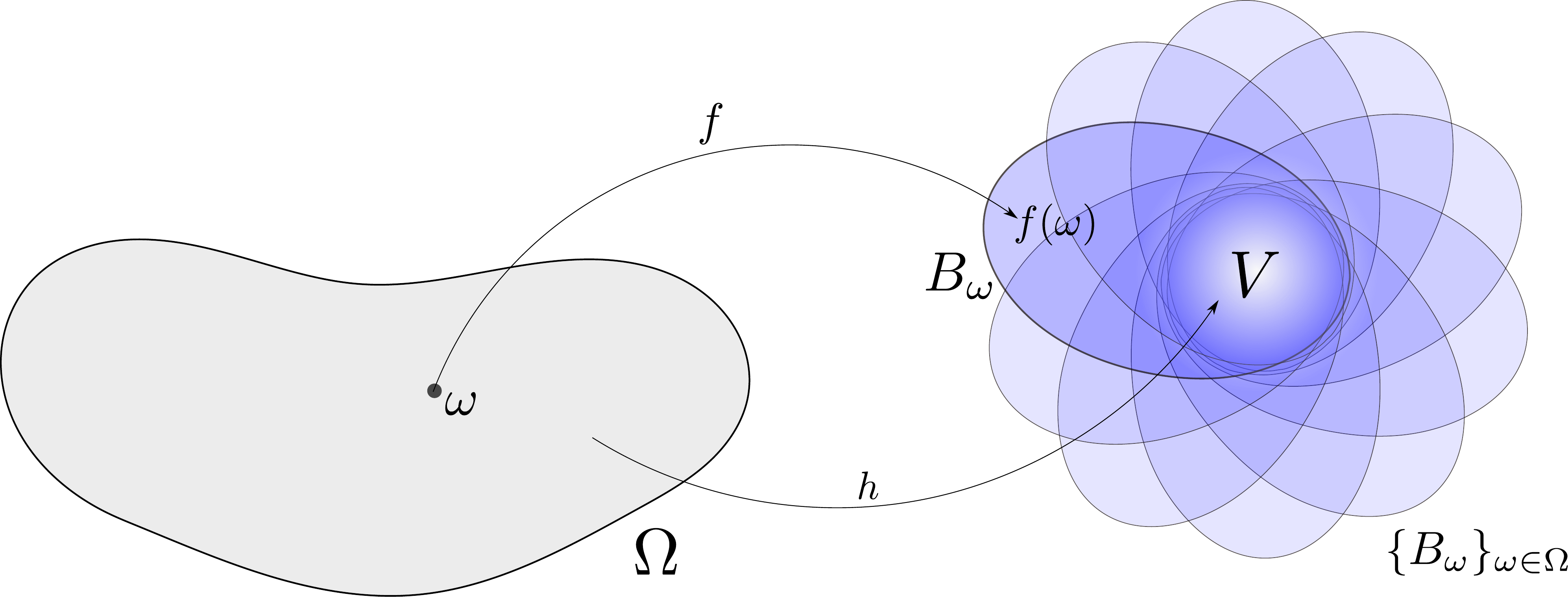}
\caption{A section $f$ takes values in a Banach space $B_\omega$ depending on $\omega$, whereas a simple section $h$ takes values in the vector space $V$, which is in the intersection of all $B_\omega$.} \label{fig:1}
\end{figure}

\begin{defn}
A section $f$ of $\{B_\omega\}_{\omega\in \Omega}$ is said to be \textit{measurable} if there exists a sequence of simple sections $\{h_k\}_{k\in\mathbb N}$ such that, for a.e. $\omega\in \Omega$, $h_k(\omega)\to f(\omega)$ in $B_\omega$ as $k\to\infty$.
\end{defn}

Let us define the \textit{direct integral} of a measurable family of Banach spaces $\{B_\omega\}_{\omega\in \Omega}$ with respect to measure $\mu$ as a space of all equivalence classes of measurable sections, and denote it $\int_{\Omega}^{\oplus}B_\omega\,d\mu$.
Note that for a measurable section $f$ the function $\omega \mapsto \|f(\omega)\|_{\omega}$ is measurable in the usual sense.

Let $K(\Omega)$ be a K\"othe function spaces.
We define a \textit{$K$-direct integral} 
$\left(\int_{\Omega}^{\oplus}B_\omega\,d\mu\right)_{K}$
as a space of all $f\in \int_{\Omega}^{\oplus}B_\omega\,d\mu$ such that the function $\|f(\omega)\|_{\omega}$ belongs to $K(\Omega)$.
Endowed with the norm
$$
\|f\|_{K(\Omega,\{B_\omega\})} 
:=  \big\|\|f(\cdot)\|_{\cdot}\big\|_K
$$
it becomes a Banach space.

Then, for every $p\in[1,\infty]$ a \textit{$L^p$-direct integral} 
$\left(\int_{\Omega}^{\oplus}B_\omega\,d\mu\right)_{L^p}$ is a Banach space with the norm
$$
\|f\|_{L^p(\Omega,\{B_\omega\})} 
:=  \left(\int_{\Omega}\|f(\omega)\|^p_{\omega}\,d\mu\right)^{\frac{1}{p}}.
$$

An important example of $L^p$-direct integral is a Lebesgue space with mixed norm.
Indeed, let $(S, \Sigma_S, \nu)$, $(X, \Sigma_X \eta)$ be $\sigma$-finite measure spaces,
$U\subset S\times X$, and $U_s = \{x\in X | (s, x)\in U\}$. 
Define a mixed-norm Lebesgue space $L^{q,\alpha}(U)$ as a set of measurable functions $f(s,x)$ with
finite norm
$$ 
||f||_{L^{q,\alpha}(U)} = \bigg(\int_{S} \bigg(\int_{U_s} |f(s,x)|^\alpha \, d\eta(x)\bigg)^{\frac{q}{\alpha}} \, d\nu(s)\bigg)^{\frac{1}{q}}.
$$
The next lemma shows that elements of the mixed-norm Lebesgue space are measurable sections with respect to the given family. 
\begin{lem}\label{mixlem} 
Let $(S,\Sigma_S, \nu)$, $(X,\Sigma_X, \eta)$ be $\sigma$-finite measure spaces,
and let $(R,\Sigma_R, \sigma)$ be their product. 

Let $1\leq p<\infty$, $U\in\Sigma_{R}$, and let $f$ be $\sigma$-measurable function on $(U, \Sigma_R\cap U, \sigma)$
such that $f(s,\cdot)$ is in $L^p(U_s, \Sigma_X\cap U_s, \nu)$ for $\nu$-a.e. $s\in S$.

Then, $f$ is a measurable section of $\{L^{p}(U_s, \Sigma_X\cap U_s, \nu)\}_{s\in S}$.
\end{lem}
\begin{proof}
The method of the proof is the very same as in 
\cite[Lemma III.11.16, p.196]{DunfordSchwartz1988}.
\end{proof}
The last lemma allows us to regard mixed-norm Lebesgue space as $L^q$-direct integral: 
$$
L^{q,\alpha}(U) = \left(\int_{S}^{\oplus}L^\alpha(U_s)\,d\nu \right)_{L^q}.
$$

\subsection{Analysis on homogeneous spaces}
Starting from Section \ref{section:MFoperators}, we need to differentiate set functions. 
Thus, we are forced to restrict ourselves to considering spaces of homogeneous type instead of general measure spaces.

\begin{defn}
\textit{A homogeneous space} $(T,d,\mu)$ is a quasimetric space with Borel measure $\mu$, which has the following two properties:
\begin{enumerate}
\item There is a constant $c_1\geq 1$ such that for all $t_1,t_2\in T$ and $r>0$
if balls $B(t_1,r)$, $B(t_2,r)$ intersect, then $B(t_2,r)\subset B(t_1,c_1r)$ (\textit{absorption});
\item For any ball $B(t, r)\subset T$ of positive radius and $0<\mu(B(T, r))<\infty$
 there is a constant $c_2$, that $\mu(B(t, {c_1r})) \leq c_2\mu(B(t,r))$ (\textit{doubling}).
 \end{enumerate}
\end{defn}

We will assume that homogeneous spaces are $\sigma$-finite measure spaces.

One of the main tools we apply in this article is the Radon--Nikod\'ym theorem 

\begin{thm}[{\cite[Theorem B, p. 128]{Halmos1974}}]
If $(T, \Sigma_T, \mu)$ is a $\sigma$-finite measure space and if a $\sigma$-finite signed measure $\nu$ on $\Sigma_T$ is absolutely continuous with respect to $\mu$, then there exists a finite valued measurable function $J$ on $T$ such that $\nu(E) = \int_E J d\mu$ for every measurable set $E$. The function $J$ is unique.
\end{thm}

A direct consequence of this theorem is the following \textit{change of variable formula}:

\begin{thm}[{\cite[Theorem D, p. 164]{Halmos1974}}]
If $\varphi$ is measurable transformation from a measure space $(T, \Sigma_T, \mu)$ into a $\sigma$-finite measure space $(S, \Sigma_S, \nu)$, such that $\mu\circ\varphi^{-1}$ is absolutely continuous with respect to $\nu$, then there exists a nonnegative measurable function $J_{\varphi^{-1}}$ (Radon--Nikod\'ym  derivative) on $S$ such that
\begin{equation}\label{formula_change}
\int_{T}f(\varphi(t))\, d\mu = \int_S f(s)J_{\varphi^{-1}}(s)\,  d\nu,
\end{equation}
for every measurable function $f$.
\end{thm}

\begin{defn}
The mapping $\varphi: T \to S$ enjoys Luzin $\mathcal N^{-1}$-property if the inverse image of the set of measure zero also has measure zero. 
The mapping $\varphi: T \to S$ enjoys Luzin $\mathcal N$-property if the image of the set of measure zero also has measure zero.
\end{defn}

In the mapping theory it is more natural to assume Luzin $\mathcal N^{-1}$-property for $\varphi$ in change of variable formula. This property guarantees that $\mu\circ\varphi^{-1}$ is absolutely continuous with respect to  $\nu$. 
In the case of homogeneous spaces the \textit{Radon--Nikod\'ym  derivative} $J_{\varphi^{-1}}(s)$ can be calculated as the volume derivative of the inverse mapping:
$$
J_{\varphi^{-1}}(s) = \lim\limits_{r\to 0}\frac{\mu(\varphi^{-1}(B(s,r)))}{\nu(B(s,r))} 
\quad \text{for $\nu$-a.e. } s\in S.
$$

In article \cite{VU04}, the following properties of countable additive set functions on homogeneous spaces were obtained:

\begin{prop}[{\cite[Corollary 5.]{VU04}}]\label{prop:setderivative}
Let $\Phi$ be a monotone countable additive set function, which defines on some family of open sets in the homogeneous space $(T,d,\mu)$ and takes finite values on open balls. 
Then

1) there exists a finite derivative for almost all $t\in T$ 
$$
\lim\limits_{r\to 0}\frac{\Phi(B(t, r))}{\mu(B(t, r))} = \Phi'(t); 
$$ 

2) $\Phi':T\to\mathbb R$ is a measurable function;

3) for any open set $U$ belonging to the given family, the next inequality holds
\begin{equation}
\int_U\Phi'(t)\, d\mu \leq \Phi(U).
\end{equation} 
\end{prop}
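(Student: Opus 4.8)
\emph{Proof proposal.} The plan is to realize $\Phi$ as (the restriction to open sets of) a Borel measure and then reduce all three assertions to a Lebesgue-type differentiation theorem on the homogeneous space $(X,d,\mu)$. First I would extend $\Phi$ from the given family of open sets to a Borel outer measure $\tilde\Phi$ by the usual Carath\'eodory construction, using monotonicity and countable additivity to verify consistency, and the finiteness on balls to guarantee that $\tilde\Phi$ is a Radon measure that is finite on bounded sets. All subsequent work then takes place on balls, where the engine is a covering lemma: the absorption and doubling axioms yield a Vitali-type (a basic $5r$-) covering theorem, so that from any family of balls covering a set $E$ one can extract a countable disjoint subfamily whose dilations still cover $E$, with $\mu$ of the dilations controlled by $\mu$ of the original balls.

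Next, to obtain the a.e.\ finiteness in part 1) together with the measurability in part 2), I would introduce the upper and lower derivatives
$$
\overline{\Phi}'(x)=\limsup_{\delta\to0,\,x\in B_\delta}\frac{\Phi(B_\delta)}{\mu(B_\delta)},
\qquad
\underline{\Phi}'(x)=\liminf_{\delta\to0,\,x\in B_\delta}\frac{\Phi(B_\delta)}{\mu(B_\delta)},
$$
together with the maximal function $M\Phi(x)=\sup_{x\in B_\delta}\Phi(B_\delta)/\mu(B_\delta)$. Covering the super-level set $\{M\Phi>\lambda\}$ by balls on which $\Phi(B_\delta)>\lambda\,\mu(B_\delta)$ and applying the covering lemma gives a weak-type bound $\mu(\{M\Phi>\lambda\})\le C\,\tilde\Phi(U)/\lambda$ on any open $U$; letting $\lambda\to\infty$ forces $\overline{\Phi}'<\infty$ a.e. Measurability follows because $\overline{\Phi}'$ and $\underline{\Phi}'$ can be written as countable $\limsup$ and $\liminf$ over a countable dense set of radii of the functions $x\mapsto\Phi(B(x,\delta))/\mu(B(x,\delta))$, which are measurable by the Borel regularity of $\tilde\Phi$; once the limit is shown to exist, $\Phi'=\overline{\Phi}'=\underline{\Phi}'$ is measurable.

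The main obstacle is the genuine existence of the limit, i.e.\ $\overline{\Phi}'=\underline{\Phi}'$ a.e., which upgrades the $\limsup$ and $\liminf$ to an honest derivative. Here I would show that for each pair of rationals $a<b$ the set $E_{a,b}=\{x:\underline{\Phi}'(x)<a<b<\overline{\Phi}'(x)\}$ is $\mu$-null: on $E_{a,b}$ there are arbitrarily small balls with $\Phi(B)<a\,\mu(B)$ and others with $\Phi(B)>b\,\mu(B)$, and comparing the two resulting covering estimates through the doubling constant (after localizing to a nearly optimal open neighbourhood supplied by outer regularity) shows $b\,\mu(E_{a,b})\le a\,\mu(E_{a,b})$, which is impossible for $a<b$ unless $\mu(E_{a,b})=0$. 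Taking the union over all rational pairs $a<b$ gives the a.e.\ equality and completes part 1).

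Finally, for part 3) I would invoke the Lebesgue decomposition $\tilde\Phi=\tilde\Phi_{ac}+\tilde\Phi_{s}$ with respect to $\mu$. The differentiation theorem just proved identifies $\Phi'$ with the Radon--Nikodym density of the absolutely continuous part, so that $\int_U\Phi'\,d\mu=\tilde\Phi_{ac}(U)$ for every admissible open set $U$. Since $\Phi$ is monotone, the singular part $\tilde\Phi_{s}$ is nonnegative, whence
$$
\int_U\Phi'\,d\mu=\tilde\Phi_{ac}(U)\le\tilde\Phi_{ac}(U)+\tilde\Phi_{s}(U)=\tilde\Phi(U)=\Phi(U),
$$
which is exactly the claimed inequality.
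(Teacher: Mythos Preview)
The paper does not contain a proof of this proposition at all: it is stated with the attribution \cite[Corollary~5.]{VU04} and then used as a black box in the proof of Theorem~\ref{thm:F1}. So there is no ``paper's own proof'' to compare your proposal against.

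That said, your outline is the standard route by which such a result is established on spaces of homogeneous type, and it is essentially correct. The Carath\'eodory extension of $\Phi$ to a Radon measure, the Vitali/$5r$ covering lemma coming from the doubling condition, the weak-type $(1,1)$ bound for the maximal function, the $E_{a,b}$ argument for a.e.\ existence of the limit, and the Lebesgue decomposition for part~3) are exactly the ingredients one finds in the literature (and presumably in \cite{VU04}). Two places deserve a little more care if you write this out in full. First, the derivative here is taken over \emph{uncentred} balls $B_\delta\ni x$, so in the $E_{a,b}$ step and in the maximal estimate you should be explicit about how doubling lets you pass between centred and uncentred quantities. Second, your measurability argument via a countable dense set of radii needs a word about continuity of $\delta\mapsto\mu(B(x,\delta))$ (or $\tilde\Phi(B(x,\delta))$) to justify that the countable $\limsup$/$\liminf$ really reproduces the full one; on a general homogeneous space balls need not be open or closed sets of continuity, so one typically argues instead with a monotone approximation in $\delta$ and the fact that $r\mapsto\tilde\Phi(B(x,r))$ is nondecreasing. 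These are refinements, not gaps in the strategy.
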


Also, we will need the following version of \textit{the Lebesgue differentiation theorem} 
(see, e.g. \cite[Corollary 3]{VU04}).
\begin{thm}
Let $(T,d,\mu)$ be a homogeneous space, $U$ be a domain in $T$ and $f\in L^1_{\operatorname{loc}}(U)$, then, for almost all $t_0 \in U$,
$$
\lim\limits_{r\to 0}\frac{1}{\mu(B(t_0, r))}\int_{B(t_0, r)}
|f(t) - f(t_0)| \, d\mu = 0.
$$
\end{thm}

\subsection{Lattice supremum}\label{sec:latsup}
In the course of describing bounded decomposable operators, we need to take the supremum of an uncountable family of measurable functions. 
The pointwise supremum is not necessarily measurable. The reasonable measurable analog is the lattice supremum.

\begin{defn}\label{defn:LatticeSup}
Let $\mathcal{F}$ be a family of measurable functions on $\Omega$.
The lattice supremum $\bigvee\mathcal{F}$ is a function that satisfies
the following two properties:
\begin{enumerate}
\item $f\leq \bigvee\mathcal{F}$ a.e. for any $f\in \mathcal{F}$
\item If $g$ is a measurable function s.t.  $f\leq g$ a.e. for all $f\in \mathcal{F}$, then  $\bigvee\mathcal{F}\leq g$ a.e.
\end{enumerate}
\end{defn}
The measurability of $\bigvee\mathcal{F}$ is the consequence of the next lemma.
\begin{lem}\label{lemma:LatticeSup}
Let $\mathcal{F}$ be a class of measurable functions defined in a measurable set $\Omega$. 
Then $\bigvee\mathcal{F}$ exists and there is a countable subfamily
 $\mathcal{G}\subset\mathcal{F}$ such that 
$$
\bigvee\mathcal{F}=\bigvee \mathcal{G}=\sup \mathcal{G}. 
$$
\end{lem}
\begin{proof}
The complete proof could be found in  \cite[Lemma 2.6]{HajlaszMaly2002} or \cite[Lemma 2.6.1]{M-N1991}. 
Here we provide only a rough idea:
there exists a sequence $\{f_k\}\subset \mathcal{F}$ such that 
$\lim\limits_{k\to\infty}\max\{f_1,\dots,f_k\}$ is the desired function $\bigvee\mathcal{F}$.
\end{proof}

\begin{prop}\label{prop:supBFS}
Let $(K(\Omega),\|\cdot\|_K)$ be a K\"othe function space having the Fatou property.
If $\mathcal F\subset K(\Omega)$ is bounded in norm  $\|\cdot\|_K$
and closed under finite maxima, then
$\bigvee\mathcal{F}\in K(\Omega)$. 
Moreover, $\bigvee\mathcal{F}$ is a pointwise limit of an
increasing sequence of functions from~$\mathcal{F}$.
\end{prop}
\begin{proof}
From the proof of Lemma \ref{lemma:LatticeSup} we know that 
$\bigvee\mathcal{F}$ is a limit of increasing sequence,
which consists of finite maxima. Then, by assumption, this sequence is in $\mathcal{F}$.
Due to the Fatou property, its limit belongs to $K(\Omega)$. 
\end{proof}

\section{Decomposable operators}\label{section:Decomposable}

In the article \cite{JR17}, authors give an exhaustive description of decomposable operators on $L^p$-direct integrals. They consider such operators as a natural generalization of the corresponding notion in the theory of the usual direct integral of separable Hilbert spaces and apply them to the group representations theory. 
We are modifying this approach to be able to extend the notion of decomposable operator to more general settings. 

Let $\{B_\omega\}_{\omega\in\Omega}$ be a measurable family of Banach spaces over 
$(\Omega,\mu, V)$, originating from some measurable family of semi-norms $\{\|\cdot\|_\omega\}_{\omega\in\Omega}$ on vector space $V$.

\begin{defn}\label{def:operators1}
\textit{A measurable family of bounded linear operators}$\{P(\omega)\}_{\omega \in \Omega}$ is a map $\omega\mapsto P(\omega)\in\mathcal{L}(B_{\omega})$, defined for almost all $\omega \in \Omega$, such that
\begin{enumerate}
\item\label{M1} A map $\omega\mapsto P(\omega)v$ is a measurable section of $\{B_{\omega}\}_{\omega\in \Omega}$ for any $v\in V$;
\item\label{M2} $\|P(\omega)\|_{\mathcal B(B_\omega)} <\infty$ almost everywhere on $\Omega$.
\end{enumerate}
\end{defn}

Condition 1. of Definition \ref{def:operators1} can be easily extended from $v \in V$ to all measurable sections. 

\begin{lem}\label{lemma:MeasurableSection}
Let $\{P(\omega)\}_{\omega\in \Omega}$ be a measurable family of bounded linear operators,
then, for any measurable section $\zeta(\omega)$ of $\{B_\omega\}_{\omega\in \Omega}$, $P(\omega)\zeta(\omega)$ is a measurable section of $\{B_{\omega}\}_{\omega\in \Omega}$.
\end{lem}
\begin{proof}
Let $\zeta(\omega)$ be a measurable section, then there is a sequence of simple sections $s_k(\omega)\to\zeta(\omega)$ a.e.
From Definition \ref{def:operators1} we obtain that $P(\omega)s_k(\omega)$ is a measurable section,
and $P(\omega)s_k(\omega) \to P(\omega)\zeta(\omega)$ in $B_\omega$ a.e.
Thus, $P(\omega)\zeta(\omega)$ is a measurable section.
\end{proof}

The set $\mathcal M$ of all measurable families of bounded linear operators as in Definition \ref{def:operators1}
is an $L^{\infty}(\Omega,\mu)$-module (in the sense that $\mathcal M$ is a generalized vector space with functions from $L^{\infty}$ instead of scalars). 
Define a mapping $N:\mathcal M\to L^0(\Omega, \mu)$ which maps a measurable family to a nonnegative measurable function by the rule
$$
N(P)(\omega) := \bigvee \left\{ \frac{\|P(\omega)v\|_\omega}{\|v\|_\omega} : v\in V\right\},
$$
with the convention that $\frac{\|P(\omega)v\|_\omega}{\|v\|_\omega} = 0$ whenever $\|v\|_\omega = 0$.
$\mathcal M$  equipped with the random norm $N$ is a randomly normed space \cite[Chapter 5]{HLR91}.

Let us note that the introduced mapping $N(P)(\omega)$ is a measurable analog of the operator norm $\|P(\omega)\|_{\mathcal B(B_\omega)}$, which is not necessarily measurable. 

\begin{lem}\label{lemma:NsupK}

The value of $N(P)(\omega)$ will not change if the lattice supremum is taken over all measurable sections, i.e.

\begin{equation}\label{eq:Nsup}
N(P)(\omega) = \bigvee \left\{ \frac{\|P(\omega)f(\omega)\|_\omega}{\|f(\omega)\|_\omega} : f \text{ is a measurable section of } \{B_\omega\} \right\}.
\end{equation}
\end{lem}
\begin{proof}
Denote the right hand side of \eqref{eq:Nsup} as $K(\omega)$.
It is clear that for any $v\in V$ 
\begin{equation}\label{eq:NsupK:eq1}
\frac{\|P(\omega)v\|_\omega}{\|v\|_\omega} \leq K(\omega).
\end{equation}
Let $g:\Omega\to\mathbb R$ be a measurable function such that for any $v\in V$
$$
\frac{\|P(\omega)v\|_\omega}{\|v\|_\omega} \leq g(\omega) \quad \text{a.e.} 
$$
For any simple section $s(\omega) =  \sum_{k=1}^n\chi_{A_k}(\omega)\cdot v_k$ we have
$$
\frac{\|P(\omega)s(\omega)\|_\omega}{\|s(\omega)\|_\omega} = \sum_{k=1}^n\frac{\|P(\omega)v_k\|_\omega}{\|v_k\|_\omega} \chi_{A_k}(\omega)
\leq g(\omega) \quad \text{a.e.}
$$
Now if $f(\omega)$ is a measurable section, then there is a sequence of simple sections which converges a.e. on $\Omega$.
Therefore, $\frac{\|P(\omega)s(\omega)\|_\omega}{\|s(\omega)\|_\omega}\leq g(\omega)$ a.e. and 
\begin{equation}\label{eq:NsupK:eq2}
K(\omega) \leq g(\omega).
\end{equation}
Thus, by Definition \ref{defn:LatticeSup}, equations \eqref{eq:NsupK:eq1}, \eqref{eq:NsupK:eq2} imply \eqref{eq:Nsup}.
\end{proof}

Based on Definition \ref{def:operators1}, we introduce the definition of a decomposable operator.

\begin{defn}
A linear operator $P$ in $\int_{\Omega}^{\oplus}B_\omega\,d\mu$
is said to be \textit{decomposable} if it is defined by the measurable family of bounded operators 
$\{P(\omega)\}$, in the sense that
\begin{equation}\label{def:M}
P[f](\omega) = P(\omega)[f(\omega)]
\end{equation}
for every $f\in \int_{\Omega}^{\oplus}B_\omega\,d\mu$.
\end{defn}

Our notion of decomposable is weaker than one from \cite{JR17}. The difference is that we do not demand the essential boundedness of function $\omega \mapsto \|P(\omega)\|$. 
Instead, we derive this property from mesurability of the family $\{P(\omega)\}$:

\begin{thm}\label{theorem:MultiplicationOperator}
Let $\{P(\omega)\}$ be a measurable family of bounded operators.
Then $Pf\in \left(\int_{\Omega}^{\oplus}B_\omega\,d\mu\right)_K$ for every $f\in \left(\int_{\Omega}^{\oplus}B_\omega\,d\mu\right)_K$
if and only if $N(P) \in L^{\infty}(\Omega)$.
In this case the induced operator $P:\left(\int_{\Omega}^{\oplus}B_\omega\,d\mu\right)_K \to \left(\int_{\Omega}^{\oplus}B_\omega\,d\mu\right)_K$
is bounded and $\|P\| = \esssup\limits_{\Omega} N(P)(\omega)$. 
\end{thm}
\begin{proof}
First, we prove that the graph $\Gamma_P$ of operator $P$ is a closed set.
Let $(f_n,g_n)\in \Gamma_P$, where $g_n(\omega) = P(\omega)f_n(\omega)$.
Suppose $(f_n,g_n)$ converges to $(f,g)$, i.e. $f_n\to f$ in $\left(\int_{\Omega}^{\oplus}B_\omega\,d\mu\right)_K$ and
$g_n\to g$ in $\left(\int_{\Omega}^{\oplus}B_\omega\,d\mu\right)_K$. 
Thanks to Proposition \ref{prop:strong-ae}, we can assume that
$f_n\to f$ a.e. and
$g_n\to g$ a.e.
Due to Definition \ref{def:operators1}, $P(\omega)f_n(\omega)\to P(\omega)f(\omega)$ a.e. and therefore $g(\omega) = P(\omega)f(\omega)$.
Thus, by the closed graph theorem, $P$ is bounded.  

Next, we show that $N(P)(\omega)$ is essentially bounded. 
Take any $0<c<\esssup\limits_{\Omega} N(P)(\omega)$.
There exists a measurable set $A$ s.t. $c\leq N(P)(\omega)$ on $A$.
For any $\varepsilon>0$, there exists a measurable section $v(\omega)$ s.t. 
$(1-\varepsilon)c \leq \frac{\|P(\omega)v(\omega)\|_\omega}{\|v(\omega)\|_\omega}$ on $A$.
Define $s(\omega):= \frac{v(\omega)}{\|v(\omega)\|_\omega}\cdot\chi_A(\omega)$, then $s\in \left(\int_{\Omega}^{\oplus}B_\omega\,d\mu\right)_K$,
$\|s\|_{K(\Omega, \{B_\omega\})}=\|\chi_A\|_{K}$, and 
$\|P(\omega)s(\omega)\|_\omega \geq (1-\varepsilon)c\chi_A(\omega)$ a.e. .
Then
\begin{multline*}
\|P\|\cdot\|\chi_A\|_K = \|P\|\cdot\|s\|_{K(\Omega,\{B_\omega\})}
\geq \|Ps\|_{K(\Omega,\{B_\omega\})} \\
= \big\| \|P(\omega)s(\omega)\|_\omega \big\|_K
\geq (1-\varepsilon)c\|\chi_A\|_K.  
\end{multline*}
Thus, $\|P\|\geq c$ and $\|P\| \geq \esssup\limits_{\Omega} N(P, \omega)$.

Now let $N(P) \in L^{\infty}(\Omega)$.
By Lemma \ref{lemma:NsupK} for any $f\in \left(\int_{\Omega}^{\oplus}B_\omega\,d\mu\right)_K$ the inequality
$\|P(\omega)f(\omega)\|_{\omega} \leq  \|N(P)\|_{L^{\infty}(\Omega)} \|f(\omega)\|_{\omega}$ holds a.e.,
therefore, 
\[
\big\|\|P(\omega)f(\omega)\|_{\omega}\big\|_K \leq  \|N(P)\|_{L^{\infty}(\Omega)} \big\|\|f(\omega)\|_{\omega}\big\|_K.
\]
\end{proof}
The above proof generalizes the solution of \cite[Problem 66]{Halmos1982} (boundedness of multiplications) from $L^2$ to $K$-direct integrals.

\subsection{Decomposable operators between distinct families }
As it was mentioned in the introduction, in this article we apply ideas, rising to the classical theory of decomposable operators, to a different type questions, related to evolutionary problems and the composition operator on Banach spaces. For this purpose, we generalize the previously introduced concept.

Let $\{D_\omega\}$ be another measurable family of Banach spaces over 
$(\Omega, \nu, W)$ ($\nu$ is absolutely continuous with respect to $\mu$), originating from the measurable family of semi-norms $\{\|\cdot\|_{D_\omega}\}_{\omega\in\Omega}$ on a vector space $W$.

\begin{defn}\label{def:operatorsQ}
\textit{A measurable family of bounded linear operators} $\{Q(\omega)\}_{\omega \in \Omega}$ is a map $\omega\mapsto Q(\omega)\in\mathcal{L}(B_{\omega},D_{\omega})$, defined for $\mu$-almost all $\omega \in \Omega$, such that
\begin{enumerate}
\item A map $\omega\mapsto Q(\omega)v$ is a measurable section of $\{D_{\omega}\}_{\omega\in \Omega}$ for any $v\in V$;
\item $\|Q(\omega)\|_{\mathcal B(B_\omega, D_\omega)} <\infty$ $\mu$-almost everywhere on $\Omega$.
\end{enumerate}
\end{defn}
Then we can define the random norm
$$
N(Q)(\omega) := \bigvee \left\{ \frac{\|Q(\omega)v\|_{D_\omega}}{\|v\|_{B_\omega}} : v\in V\right\}.
$$
It is easy to prove, as in Lemma \ref{lemma:NsupK}, that the lattice supremum in the definition of $N(Q)$ can be taken over all measurable section of family $\{B_\omega\}$.

\begin{defn}
A linear operator $Q : \int_{\Omega}^{\oplus}B_\omega\,d\mu \to \int_{\Omega}^{\oplus}D_\omega\,d\nu$
is said to be \textit{decomposable} if it is defined by the measurable family of bounded operators 
$\{Q(\omega)\}$, in the sense that
\begin{equation}\label{def:M}
Q[f](\omega) = Q(\omega)[f(\omega)]
\end{equation}
for every $f\in \int_{\Omega}^{\oplus}B_\omega\,d\mu$ (see Fig. \ref{fig:operatorQ}).
\end{defn}

\begin{figure}[h]
\centering	
\includegraphics[width=0.8\linewidth]{./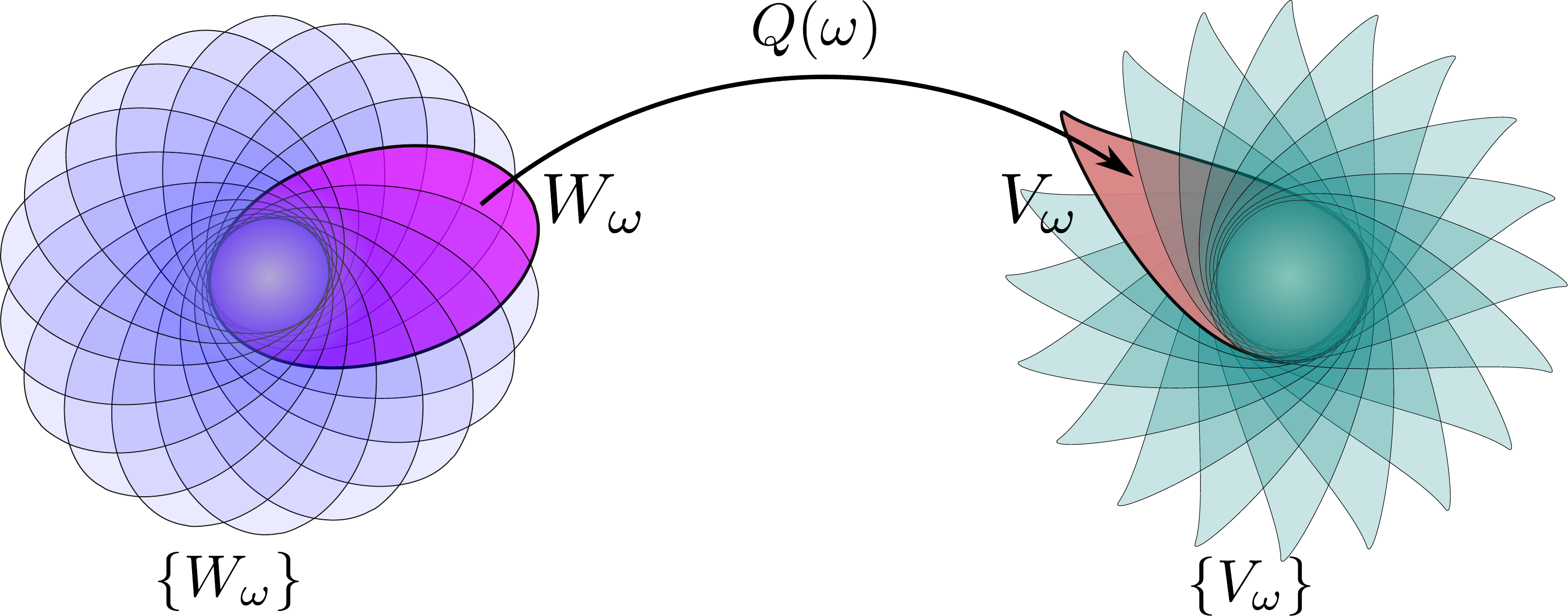}
\caption{Operator $Q$ acts between different measurably families} \label{fig:operatorQ}
\end{figure}

In article \cite{JR17}, a simple example of constructing of a decomposable operator is provided using the "lifting" of operator, acting on the core vector space. In the same way, we can construct an example of a generalized decomposable operator.
Let $\widetilde Q : V \to W$ be a linear mapping such that there exist a function $M \in L^\infty(\Omega)$ and the inequality $\|\widetilde Qv\|_{D_\omega} \leq M(\omega) \|v\|_{B_\omega}$, $v \in V$, holds for $\mu$-almost all $\omega \in \Omega$. Since $\widetilde Q$ maps $\ker(\|\cdot\|_{B_\omega})$ in $\ker(\|\cdot\|_{D_\omega})$, then there exists a linear map $Q(\omega): V / \ker(\|\cdot\|_{B_\omega}) \to W / \ker(\|\cdot\|_{D_\omega})$, defined for $\mu$-almost all $\omega \in \Omega$. This operator can be extended to a bounded operator $Q(\omega): B_\omega \to D_\omega$. 
It is easy to see that $\omega \mapsto Q(\omega)v$ is a measurable section of $\{D_{\omega}\}_{\omega\in \Omega}$, $\|Q(\omega)\|_{\mathcal B(B_\omega, D_\omega)} <\infty$ almost everywhere on $\Omega$ due to the construction of operators, and, hence, $\{Q(\omega)\}_{\omega \in \Omega}$ is a measurable family of operators.
Finally, it is clear that $N(Q)(\omega) \leq M(\omega)$ a.e.

As in Theorem \ref{theorem:MultiplicationOperator}, we have that the family $\{Q(\omega)\}$ induces an 
operator $Q: \left(\int_{\Omega}^{\oplus}B_\omega\,d\mu\right)_K \to \left(\int_{\Omega}^{\oplus}D_\omega\,d\nu\right)_K$ iff $N(Q)\in L^\infty(\Omega)$ and in this case $Q$ is a bounded operator
with norm $\|Q\| = \esssup\limits_{\Omega} N(Q)(\omega)$.


The next natural step is to consider different external K\"ote norms over direct integrals, i.e. $Q: \left(\int_{\Omega}^{\oplus}B_\omega\,d\mu\right)_{K_1} \to \left(\int_{\Omega}^{\oplus}D_\omega\,d\nu\right)_{K_2}$. Unfortunately we cannot provide characterization in terms of $N(Q)$ for general case (see Remark \ref{remsetfunction}), but, in the particular case, when $K_1$ and $K_2$ are $L_p$-spaces and $\Omega$ is homogeneous space, we have the following characterization.
\begin{thm}\label{theorem:operatorQ}
Let $1\leq q<p<\infty$ and  measure $\mu$ be absolutely continuous with respect to $\nu$.
Then a measurable family of operators $\{Q(\omega)\}$ induces a bounded operator from 
$\left(\int_{\Omega}^{\oplus}B_\omega\,d\mu\right)_{L^p}$
to
$\left(\int_{\Omega}^{\oplus}D_\omega\,d\nu\right)_{L^q}$ by the rule $Qf(\omega) = Q(\omega)f(\omega)$
iff
$$
N(Q)J \in L^{\frac{pq}{p-q}}(\Omega),
$$
where $J = \frac{d\nu}{d\mu}$ is a Radon--Nikod\'ym derivative.
\end{thm}
The proof of this theorem will be provided in Section \ref{section:WeightedCO}.

\section{Decomposable operators on distinct spaces (Construction of $M_F$) }\label{section:MFoperators}
In this section, we introduce the concept of a decomposable operator acting between different $L^p$-direct integrals. This concept, on the one hand, corresponds to the concepts of the previous section, on the other hand, it reflects the idea of considering the composition operator, defined not by a mapping, but by a binary correspondence. Unfortunately, due to the large number of objects involved, the formulations and proofs look cumbersome and too technical.

Let $(T, \Sigma_T, \mu)$ and $(S, \Sigma_S, \nu)$ be measure spaces and $\{W_t\}_{t\in T}$, $\{V_s\}_{s\in S}$ be corresponding measurable families of Banach spaces over vector spaces $W$ and $V$ respectively.
Choose a measurable subset $F$ of the product measurable space $S\times T$ and consider a measure space $(F, \Sigma_F,\lambda)$, where $\Sigma_F$ is generated by $\Sigma_S \times \Sigma_T $, and $\lambda$ is a measure. 
Take an additional family of Banach spaces  $\{\widetilde{V}_{(s,t)}\}_{(s,t)\in F}$, such that $\widetilde V_{(s,t)} = V_s$ for all $(s,t)\in F$.
Note that this family is measurable due to its construction (also see \cite[Lemma III.11.10]{DunfordSchwartz1988}).
Next, let $\int_{T}^{\oplus} W_t\,d\mu$ and $\int_{F}^{\oplus} \widetilde V_{(s,t)} \, d\lambda$ be direct integrals over these families with respect to measures $\mu$ and $\lambda$.

\begin{defn}\label{def:operatorsF}
\textit{A measurable family of bounded linear operators} $\{P(s,t)\}_{(s,t)\in F}$ is a map $(s,t) \mapsto P(s,t) \in \mathcal{L}(W_t,V_s)$, defined for $\lambda$-almost all $(s,t)\in F$, such that
\begin{enumerate}
\item A map $(s,t) \mapsto P(s,t)w$ is a measurable section of $\{\widetilde{V}_{(s,t)}\}_{(s,t)\in F}$ for any $w\in W$;
\item $\|P(s,t)\|_{\mathcal B(W_t,V_s)} <\infty$ almost everywhere on $F$.
\end{enumerate}
\end{defn}

\begin{lem}
Let $\{P(s,t)\}_{(s,t)\in F}$ be a measurable family of bounded linear operators,
then for any measurable section $\zeta(t)$ of $\{W_t\}_{t\in T}$ $P(s,t)\zeta(t)$ is a measurable section of $\{\widetilde{V}_{(s,t)}\}_{(s,t)\in F}$.
\end{lem}
\begin{proof}
The same as in Lemma \ref{lemma:MeasurableSection}.
\end{proof}

Let $\mathcal M$ be the set of all measurable families of bounded linear operators as in Definition \ref{def:operatorsF}.
Define a mapping $N:\mathcal M\to L^0(F,\lambda)$ which maps a measurable family to a measurable function by the rule
$$
N(P)(s,t) = \bigvee \left\{ \frac{\|P(s,t)w\|_s}{\|w\|_t} : w\in W\right\},
$$
with the convention that $\frac{\|P(s,t)w\|_s}{\|w\|_t} = 0$ whenever $\|w\|_t = 0$.

\begin{lem}\label{lemma:NsupKst}
The value of $N(P)(s,t)$ will not change if the lattice supremum is taken over all measurable sections of $\{W_t\}$, i.e.
\begin{equation}\label{eq:Nsupst}
N(P)(s,t) = \bigvee \left\{ \frac{\|P(s,t)f(t)\|_s}{\|f(t)\|_t} : f \text{ is a measurable section of } \{W_t\} \right\}.
\end{equation}
\end{lem}
\begin{proof}
The same as in Lemma \ref{lemma:NsupK}.
\end{proof}

\begin{defn}
We say that operator 
$$
M_F: \int_{T}^{\oplus}W_t\,d\mu \to \int_{F}^{\oplus} \widetilde{V}_{(s,t)}\,d\lambda
$$
is \textit{decomposable operator between distinct spaces} if
there exists a measurable family of bounded linear operators $P$ such that
\begin{equation}\label{def:MF}
M_F[f](s,t) = P(s,t)[f(t)].
\end{equation}
\end{defn}

\begin{figure}[h]
\centering	
\includegraphics[width=0.8\linewidth]{./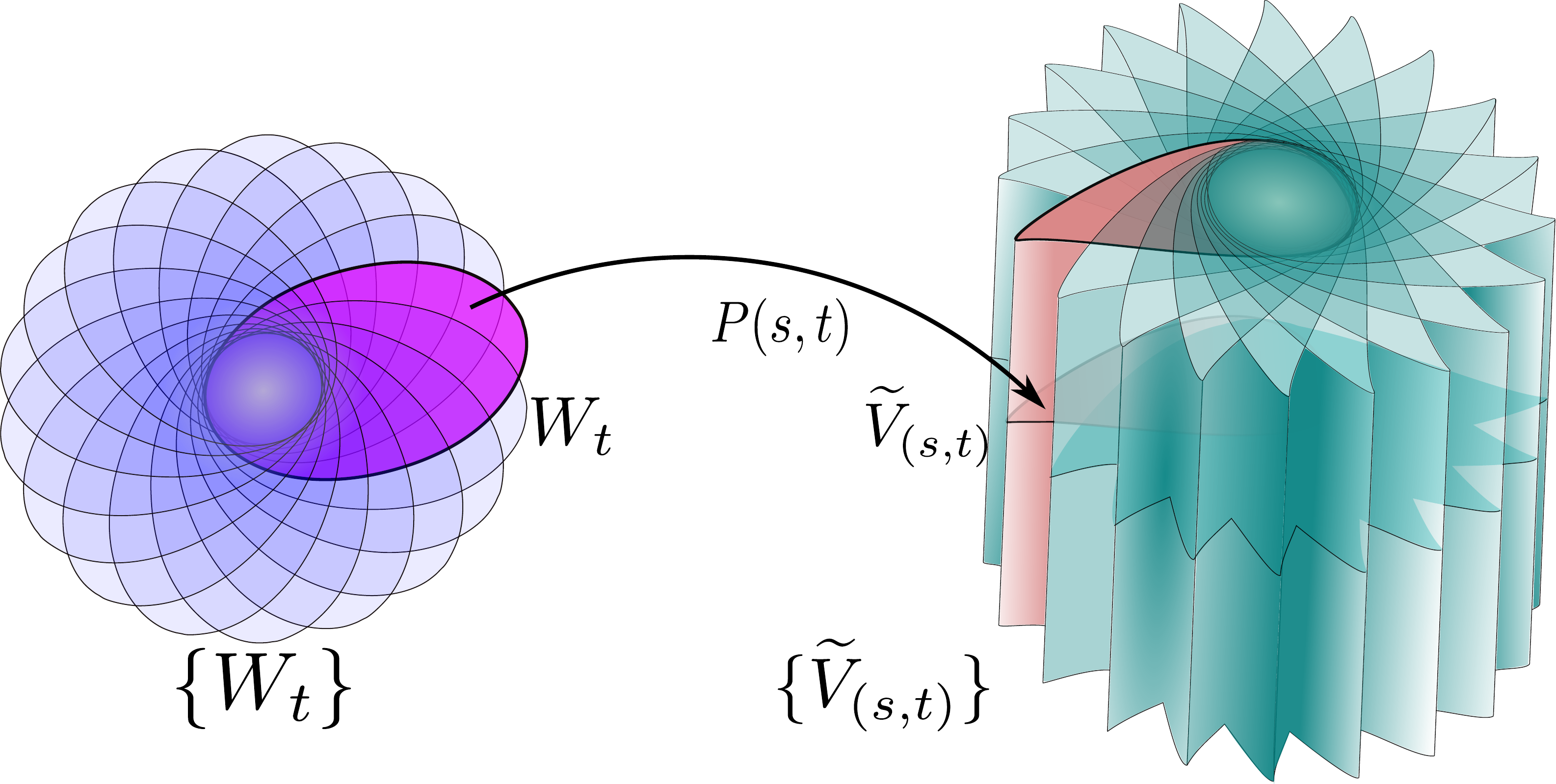}
\caption{Operator $M_F$ acts from a measurable family $\{W_t\}$ to a "cylinder" $\widetilde{V}_{(s,t)}$ with the base $\{V_s\}$} \label{fig:2}
\end{figure}

\begin{lem}
 The mixed operator $M_F$ is well-defined, in the sense that if $f \in \int_{T}^{\oplus}W_t\,d\mu$ then $M_F[f](s,t) \in \int_{F}^{\oplus}\widetilde{V}_{(s,t)}\,d\lambda$.
\end{lem}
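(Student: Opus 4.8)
The plan is to verify two things: that $M_F[f]$ takes values in the correct fibers (so that it is genuinely a section of $\{\widetilde V_{(s,t)}\}$), and that it is measurable. The first is immediate: for $(s,t)\in F$ we have $P(s,t)\in\mathcal B(W_t,V_s)$ and $f(t)\in W_t$, hence $M_F[f](s,t)=P(s,t)[f(t)]\in V_s=\widetilde V_{(s,t)}$. All the work is in the measurability, and I would prove it by producing an explicit approximating sequence of simple sections.

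Since $f$ is measurable, pick simple sections $f_k(t)=\sum_i \chi_{A_i^k}(t)\,x_i^k$ (with $x_i^k\in W$ and $A_i^k\subset T$ measurable) such that $f_k(t)\to f(t)$ in $W_t$ for $\mu$-a.e.\ $t$; since $P$ is a measurable section of $\{\mathcal B(W_t,V_s)\}$, pick simple sections $P_k(s,t)=\sum_j \chi_{C_j^k}(s,t)\,Q_j^k$ (with $Q_j^k\in\bigcap\mathcal B(W_t,V_s)$ and $C_j^k\subset F$ measurable) such that $P_k(s,t)\to P(s,t)$ in operator norm for $\lambda$-a.e.\ $(s,t)$. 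Set $h_k(s,t):=P_k(s,t)[f_k(t)]$. Expanding and using linearity,
$$
h_k(s,t)=\sum_{i,j}\chi_{C_j^k\cap(S\times A_i^k)}(s,t)\,Q_j^k[x_i^k],
$$
and since each $Q_j^k$ carries the core $W$ into the core $V$, each coefficient $Q_j^k[x_i^k]$ lies in $V$ while each indicator is that of a measurable subset of $F$; thus $h_k$ is a simple section of $\{\widetilde V_{(s,t)}\}$.

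It then remains to show $h_k(s,t)\to M_F[f](s,t)$ in $V_s$ for $\lambda$-a.e.\ $(s,t)$, and here I would use the matched-index estimate
$$
\|M_F[f](s,t)-h_k(s,t)\|_{V_s}\le \|P(s,t)-P_k(s,t)\|\,\|f_k(t)\|_{W_t}+\|P(s,t)\|\,\|f(t)-f_k(t)\|_{W_t}.
$$
Fix such an $(s,t)$: the factor $\|f_k(t)\|_{W_t}$ is bounded (it converges to $\|f(t)\|_{W_t}<\infty$), so the first term tends to $0$ because $P_k(s,t)\to P(s,t)$ in operator norm, while the second term tends to $0$ because $f_k(t)\to f(t)$ in $W_t$ and $\|P(s,t)\|<\infty$. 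Hence $h_k$ converges pointwise to $M_F[f]$ off the set $N_2\cup\bigl(F\cap(S\times N_1)\bigr)$, where $N_1\subset T$ is the $\mu$-null set on which $f_k\not\to f$ and $N_2\subset F$ is the $\lambda$-null set on which $P_k\not\to P$.

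The one point requiring care — and the main obstacle — is that the convergence of $f_k$ is only $\mu$-a.e.\ on $T$, whereas I need $\lambda$-a.e.\ convergence on $F$. This forces me to know that the $\mu$-null set $N_1$ pulls back to a $\lambda$-null set, i.e.\ that $\mu(N)=0$ implies $\lambda\bigl(F\cap(S\times N)\bigr)=0$, a compatibility of $\lambda$ with $\mu$ through the projection $F\to T$. This is precisely the condition that already makes $M_F$ well defined on equivalence classes (it guarantees $M_F[f]$ is unchanged when $f$ is modified on a $\mu$-null set), so it should belong to the standing hypotheses; granting it, the exceptional set above is $\lambda$-null, $h_k\to M_F[f]$ $\lambda$-a.e., and therefore $M_F[f]$ is a measurable section, which completes the proof.
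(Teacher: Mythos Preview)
Your argument is essentially the same as the paper's: approximate $P$ and $f$ by simple sections $P_k,f_k$, form $h_k=P_k[f_k]$, and use the same triangle-inequality split $\|P[f]-P_k[f_k]\|\le\|P\|\,\|f-f_k\|+\|P-P_k\|\,\|f_k\|$ to get pointwise convergence. You are in fact more explicit than the paper on two points it glosses over---the verification that $h_k$ is a genuine simple section, and the compatibility condition $\lambda(F\cap(S\times N))=0$ for $\mu$-null $N$ needed to pass from $\mu$-a.e.\ convergence of $f_k$ to $\lambda$-a.e.\ convergence on $F$; the paper's proof silently assumes the latter as well.
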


\begin{proof}
We have to prove that measurability (in the sense of sections) of $P \in \int_{F}^{\oplus} \mathcal{B}(W_t, V_s) \, d\lambda$ and $f \in \int_{T}^{\oplus} W_t \, d\mu$ implies measurability of function $M_F[f](s,t) = g(s,t) \in \int_{F}^{\oplus} \widetilde{V}_{(s,t)} \, d\lambda $. Let $\{H_n\}_{n \in \mathbb{N}}$ be a sequence of simple sections in $\int_{F}^{\oplus} \mathcal{B}(W_t, V_s) \, d\lambda$ which converges to $P$ for almost all $(s,t) \in F$ and $\{h_n\}_{n \in \mathbb{N}}$ be the same sequence for function $f$. Then consider the sequence $H_n[h_n]$. Due to construction, elements of this sequence are simple sections in  $\int_{F}^{\oplus} \widetilde{V}_{(s,t)} \, d\lambda$. Let us show that this sequence converges to $g(s,t)$ for almost all $(s,t) \in F$.
\begin{multline*}
  \lim\limits_{n \to \infty} \| g(s,t) - H_n(s,t)[h_n(t)] \|_s \\
  = \lim\limits_{n \to \infty} \| P(s,t)[f(t)] + P(s,t)[h_n(t)] - P(s,t)[h_n(t)] - H_n(s,t)[h_n(t)] \|_s \\
  \leq \lim\limits_{n \to \infty} \| P(s,t)[f(t)] - P(s,t)[h_n(t)] \|_s + \lim\limits_{n \to \infty} \| P(s,t)[h_n(t)] - H_n(s,t)[h_n(t)] \|_s.
\end{multline*}
By virtue of linearity and boundedness of $P(s,t)$ in the first summand, the sequence $P(s,t)[h_n(t)]$ tends to $P(s,t)[f(t)]$ and, hence, the limit is zero. Consider the second summand.
\begin{multline*}
  \lim\limits_{n \to \infty} \| P(s,t)[h_n(t)] - H_n(s,t)[h_n(t)] \|_s \\ 
  = \lim\limits_{n \to \infty} \frac{\| P(s,t)[h_n(t)] - H_n(s,t)[h_n(t)] \|_s}{\|h_n(t)\|_t} \|h_n(t)\|_t  \\
  \leq \lim\limits_{n \to \infty} \sup\limits_{\|w\|_t \ne 0} \frac{\| P(s,t)[w] - H_n(s,t)[w] \|_s}{\|w\|_t} \|h_n(t)\|_t.
\end{multline*}
The first multiplier tends to zero due to definition of $H_n$ and the second is finite. Hence, we yield that $H_n[h_n]$ converges to $g(s,t)$ for almost all $(s,t) \in F$.
\end{proof}

\subsection{Boundedness}
Here we are going to study the following question: \textit{under which conditions is the operator $M_F$ bounded on $L^p$-direct integrals?} 
It means that for the operator 
$$
M_F:\left(\int_{T}^{\oplus}W_t\,d\mu\right)_{L^p} \to \left(\int_{F}^{\oplus}\widetilde{V}_{(s,t)}\,d\lambda\right)_{L^q}
$$
there is a constant $K$ such that
$$
\|M_F f\|_{L^q(F,\{\widetilde{V}_{(s,t)}\})} \leq K \|f\|_{L^p(T,\{W_t\})}
$$
for all $f\in \left(\int_{T}^{\oplus}W_t\,d\mu\right)_{L^p}$. 
Hereinafter we assume that spaces $(T, \mu)$ and $(S, \nu)$ are the homogeneous spaces with metrics $d$ and $\rho$ respectively.

We begin with a lemma, which plays a key role in the proof of the main result (the necessary part). It describes the boundedness of operator $M_F$  through a set function of special type.
This kind of result was originated by the works of S.~Vodopyanov and A.~Ukhlov (e.g. \cite{U93,VU10,VU04}), where the boundedness of composition operators on Lebesgue and Sobolev spaces was studied.
In our setting it allows to obtain the dependence of the optimal constant $K$ on the domain of definition:
$$
\|M_F f\|_{L^q(F\cap S\times A,\{\widetilde{V}_{(s,t)}\})} \leq \Phi(A) \|f\|_{L^p(A,\{W_t\})},
$$
for all $f\in \left(\int_{A}^{\oplus}W_t\,d\mu\right)_{L^p}$, where $A\subset T$.

\begin{lem}\label{lemma:setfunction1}
Suppose that operator $M_{F}$ is bounded,  
then
$$
\Phi(A) = \sup\limits_{f\in L^p(A, \{W_t\})}
\left(\frac{\|M_{F} f\|_{L^q(F\cap S\times A,\{\widetilde{V}_{(s,t)}\})}}{\|f\|_{L^p(A,\{W_t\})}} \right)^{\kappa}, 
\quad \kappa = \frac{pq}{p-q}
$$
is a bounded monotone countable additive function on all Borel sets $A\subset T$, ${\mu(A)>0}$.  
\end{lem}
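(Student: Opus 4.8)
The plan is to verify the three asserted properties—boundedness, monotonicity, and countable additivity—separately, with the exponent identity $q/\kappa=(p-q)/p$ serving as the algebraic engine behind the additivity. Throughout I regard a function on a subset $A\subset T$ as a function on all of $T$ extended by zero; since $M_F[f](s,t)=P(s,t)[f(t)]$ vanishes wherever $f(t)=0$, the section $M_F f$ is then supported on $F\cap S\times A$, so that $\|M_F f\|_{L^q(F\cap S\times A,\{\widetilde V_{(s,t)}\})}=\|M_F f\|_{L^q(F,\{\widetilde V_{(s,t)}\})}$ and $\|f\|_{L^p(A,\{W_t\})}=\|f\|_{L^p(T,\{W_t\})}$.

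First I would dispose of boundedness and monotonicity. Boundedness is immediate: by the assumed bound $\|M_F f\|_{L^q}\le K\|f\|_{L^p}$ the ratio defining $\Phi$ never exceeds $K$, whence $\Phi(A)\le K^{\kappa}$ for every $A$. Monotonicity is equally direct: if $A_1\subset A_2$, every competitor $f$ supported in $A_1$ is admissible for $A_2$ and, by the remark above, produces the same ratio in both suprema, so $\Phi(A_1)\le\Phi(A_2)$.

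The core is countable additivity, which I would split into subadditivity and superadditivity. For subadditivity, fix disjoint Borel sets $\{A_n\}$ with union $A$ and any $f\in L^p(A,\{W_t\})$. Writing $f_n=f\chi_{A_n}$, the disjointness gives $\|f\|_{L^p(A)}^p=\sum_n\|f_n\|_{L^p(A_n)}^p$, while the sections $M_F f_n$ have disjoint supports in $F$, so $\|M_F f\|_{L^q}^q=\sum_n\|M_F f_n\|_{L^q}^q$. Bounding each term by the definition of $\Phi(A_n)$ and then applying H\"older's inequality with respect to counting measure, with the conjugate exponents $p/(p-q)$ and $p/q$—this is exactly where $q/\kappa=(p-q)/p$ enters, as it turns the resulting factor into $\big(\sum_n\Phi(A_n)\big)^{(p-q)/p}$—yields $\|M_F f\|_{L^q}\le\big(\sum_n\Phi(A_n)\big)^{1/\kappa}\|f\|_{L^p(A)}$, and hence $\Phi(A)\le\sum_n\Phi(A_n)$ after taking the supremum and the $\kappa$-th power.

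For the reverse inequality I would first prove finite superadditivity and then bootstrap. Given disjoint $A_1,A_2$ and $\varepsilon>0$, choose normalized near-optimizers $f_i$ supported in $A_i$ with $\|M_F f_i\|_{L^q}^{\kappa}\ge\Phi(A_i)-\varepsilon$, and test $\Phi(A_1\cup A_2)$ against $f=c_1 f_1+c_2 f_2$. Disjointness again decouples both norms, and choosing the weights $c_i=(\Phi(A_i)-\varepsilon)^{1/p}$—the configuration realizing equality in the H\"older step above—forces the ratio to be at least $\big((\Phi(A_1)-\varepsilon)+(\Phi(A_2)-\varepsilon)\big)^{1/\kappa}$; letting $\varepsilon\to0$ gives $\Phi(A_1\cup A_2)\ge\Phi(A_1)+\Phi(A_2)$. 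By induction $\Phi\big(\bigcup_{n\le N}A_n\big)\ge\sum_{n\le N}\Phi(A_n)$, and monotonicity upgrades this to $\Phi(A)\ge\sum_{n=1}^{\infty}\Phi(A_n)$ (the partial sums being bounded by $K^{\kappa}$, the series converges); together with subadditivity this is the desired countable additivity. The main obstacle is the superadditivity step: one must exhibit the precisely tuned weights $c_i$ realizing equality in H\"older, and the disjointness of the supports of the $M_F f_i$ is what makes the combined test function efficient. The whole scheme tacitly requires $p>q$, so that $\kappa>0$; this is precisely the regime in which $\Phi$ is a genuine additive set function rather than an $L^{\infty}$ quantity.
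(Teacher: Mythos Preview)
Your proposal is correct and follows essentially the same route as the paper's proof: boundedness and monotonicity are read off the definition, subadditivity comes from H\"older's inequality applied to the disjoint decomposition, and superadditivity is obtained by testing against suitably weighted near-optimizers on the pieces (the paper normalizes via $\|f_i\|_{L^p(A_i)}^p=\Phi(A_i)(1-\varepsilon/2^i)$ and works directly with $N$ sets, whereas you normalize $\|f_i\|_{L^p}=1$ and introduce weights $c_i=(\Phi(A_i)-\varepsilon)^{1/p}$ for two sets, then induct---these are the same computation). The only cosmetic difference is the order in which the two halves of additivity are presented.
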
   

\begin{proof}
From the very definition of the set function $\Phi$ we immediately conclude 
$\Phi(A_1)\leq\Phi(A_2)$ whenever $A_1\subset A_2$, in particular 
$\Phi(A)\leq \Phi(T)\leq \|M_F\|^{\kappa}$.
Thus, properties of boundedness and monotonicity hold.

Now show the countable additivity of $\Phi$.
Let $\{A_i\}_{i\in\mathbb N}$ be disjoint Borel sets in $T$.
Fix $\varepsilon>0$. 
The supremum property implies there exist functions $\{f_i\}$ in $L^p(T, \{W_t\})$
such that
\begin{equation}\label{eq:sup_prop}
\|M_F f_i\|_{L^q(F\cap S\times A_i,\{\widetilde{V}_{(s,t)}\})}
\geq \left(\Phi(A_i)\left(1 - \frac{\varepsilon}{2^i}\right) \right)^{\frac{1}{\kappa}}
\|f_i\|_{L^p(A_i, \{W_t\})}.
\end{equation}
Homogeneity of the $L^p$ norm and operator $M_F$ allow us to choose functions $\{f_i\}$
in such a way that
\begin{equation}\label{eq:hom_prop}
\|f_i\|^p_{L^p(A_i, \{W_t\})} = \Phi(A_i)\left(1 - \frac{\varepsilon}{2^i}\right).
\end{equation}
We can assume $f_i=0$ on  $T\setminus A_i$.
Then for an arbitrary $N\in\mathbb N$, we use linearity of the operator
and apply inequality \eqref{eq:sup_prop}.
\begin{multline*}
\bigg\|M_F \sum\limits_{i=1}^N f_i\bigg\|^q_{L^q(\bigcup F\cap S\times A_i,\{\widetilde{V}_{(s,t)}\})} 
= \sum\limits_{i=1}^N \|M_F f_i\|^q_{L^q(F\cap S\times A_i,\{\widetilde{V}_{(s,t)}\})}\\
\geq \sum\limits_{i=1}^N\left(\Phi(A_i)\left(1 - \frac{\varepsilon}{2^i}\right) \right)^{\frac{q}{\kappa}}
\|f_i\|^q_{L^p(A_i, \{W_t\})}. 
\end{multline*}  

Applying equality  \eqref{eq:hom_prop} twice, one obtains

\begin{multline*}
\sum\limits_{i=1}^N\left(\Phi(A_i)\left(1 - \frac{\varepsilon}{2^i}\right) \right)^{\frac{q}{\kappa}}
\|f_i\|^q_{L^p(A_i, \{W_t\})} 
= \sum\limits_{i=1}^N\left(\|f_i\|_{L^p(A_i, \{W_t\})} \right)^{\frac{pq}{\kappa}}
\|f_i\|^q_{L^p(A_i, \{W_t\})}\\
= \sum\limits_{i=1}^N\|f_i\|^p_{L^p(A_i, \{W_t\})}
=\left(\sum\limits_{i=1}^N\|f_i\|^p_{L^p(A_i, \{W_t\})}\right)^{\frac{p-q}{p}}\left(\sum\limits_{i=1}^N\|f_i\|^p_{L^p(A_i, \{W_t\})} \right)^{\frac{q}{p}}\\
= \left(\sum\limits_{i=1}^N\Phi(A_i)\left(1 - \frac{\varepsilon}{2^i}\right)\right)^{\frac{p-q}{p}}\left(\sum\limits_{i=1}^N\|f_i\|^p_{L^p(A_i, \{W_t\})} \right)^{\frac{q}{p}}\\
\geq \left(\sum\limits_{i=1}^N\Phi(A_i) - \varepsilon\Phi(\bigcup A_i) \right)^{\frac{p-q}{p}}
\bigg\|\sum\limits_{i=1}^N f_i\bigg\|^q_{L^p(\bigcup A_i, \{W_t\})}.
\end{multline*}
Thus, 
$$
\left(\frac{\big\|M_F \sum\limits_{i=1}^N f_i\big\|_{L^q\big(\bigcup F\cap S\times A_i,\{\widetilde{V}_{(s,t)}\} \big)}}
{\big\|\sum\limits_{i=1}^N f_i\big\|_{L^p\big(\bigcup A_i, \{W_t\} \big)}} \right)^{\kappa}
\geq \sum\limits_{i=1}^N\Phi(A_i) - \varepsilon\Phi\big(\bigcup A_i\big),
$$
and it follows
$$
\Phi\big(\cup A_i\big) \geq \sum\limits_{i=1}^N\Phi(A_i).
$$
Check the reverse inequality. With the help of  H\"older's inequality have 
\begin{multline*}
\big\|M_F f\big\|^q_{L^q(\bigcup F\cap S\times A_i,\{\widetilde{V}_{(s,t)}\})} 
= \sum\limits_{i=1}^N \|M_F f\|^q_{L^q(F\cap S\times A_i,\{\widetilde{V}_{(s,t)}\})}\\
\leq \sum\limits_{i=1}^N\left(\Phi(A_i) \right)^{\frac{q}{\kappa}}\|f_i\|^q_{L^p(A_i, \{W_t\})}
\leq \left(\sum\limits_{i=1}^N\Phi(A_i)\right)^{\frac{p-q}{p}}\left(\sum\limits_{i=1}^N\|f_i\|^p_{L^p(A_i, \{W_t\})}\right)^{\frac{q}{p}} . 
\end{multline*}  
Consequently,
$$
\frac{\big\|M_F f\big\|_{L^q(\bigcup F\cap S\times A_i,\{\widetilde{V}_{(s,t)}\})}}{\|f\|_{L^p(\cup A_i, \{W_t\})}} \leq \left(\sum\limits_{i=1}^N\Phi(A_i)\right)^{\frac{1}{\kappa}}
$$
and
$
\Phi\big(\cup A_i\big) \leq \sum\limits_{i=1}^N\Phi(A_i)
$
as desired.
\end{proof}

\begin{rem}\label{remsetfunction}
Set functions of this type can be examined for a general case, when K\"ote spaces $E_1$, $E_2$ are considered instead of Lebesgue spaces. 
Unfortunately, it seems that the lemma is valid only in the case of $L^p$-norms, so we cannot prove the additivity property.  
There are some attempts to obtain same result for other spaces, but it demands a number of additional assumptions (e. g. \cite[Lemma 1]{M17}, \cite{E15,E17}).
Therefore, this is a main technical obstacle that leads us to consider only the case of $L^p$-spaces.
\end{rem}

The first main result of the article is the following theorem.

\begin{thm}\label{thm:F1}
Let $\lambda$ be absolutely continuous with respect to $\mu\times\nu$ and 
$\{P(s,t)\}_{(s,t) \in F}$ be a measurable family of operators. Then the operator 
$M_F:\left(\int_{T}^{\oplus}W_t\,d\mu\right)_{L^p} \to \left(\int_{F}^{\oplus} \widetilde{V}_{(s,t)} \,d\lambda\right)_{L^q}$, $p\geq q$, 
is bounded if and only if
\begin{equation}\label{thm:main:eq} 
N(P)(s,t)\cdot J^{\frac{1}{q}}(s,t)\in L^{\kappa, q}(F), \quad \kappa = \begin{cases} \frac{pq}{p-q}, & p>q, \\ \infty, & p=q, \end{cases}
\end{equation}
$J(s,t) = \frac{d\lambda}{d\mu\times\nu}$ is a Radon--Nikod\'ym derivative of measure $\lambda$ with respect to $\mu\times\nu$, 
and $\kappa = \frac{pq}{p-q}$ if $p > q$ and $\kappa = \infty$ if $p=q$.
\end{thm}

\begin{proof}
Let $p>q$.
Thanks to Lemma \ref{lemma:setfunction1} for any function $f \in L^p(T, \{W_t\}))$ and a ball $B(\tau,r) \subset T$ we have the next inequality
\begin{equation}\label{thm:main:eq1}
\|M_F f\|_{L^q(F \cap S \times B(\tau,r), \{\widetilde{V}_{(s,t)}\})} \leq \Phi^{\frac{1}{\kappa}}(B(\tau,r)) \|f\|_{L^p(B(t,r),\{W_t\})}.
\end{equation}
Let $u(t)$ be a measurable section of  $\{W_t\}$.
Define section  $g(t) = \frac{u(t)}{\|u(t)\|_t}$.
Note that $\|g\|_{L^p(B(\tau,r); \{W_t\})} = (\mu(B(\tau,r)))^{\frac{1}{p}}$.
Then equation \eqref{thm:main:eq1} implies 
$$
\Bigg(\int_{F \cap S \times B(\tau,r)} \frac{\|P(s, t)u(t)\|^{q}_{s}}{\|u(t)\|^q_t} \, d\lambda(s,t) \Bigg)^{\frac{1}{q}} 
\leq \Phi^{\frac{1}{\kappa}}(B(\tau,r))\mu^{\frac{1}{p}}(B(\tau,r)).  
$$

Using properties of Radon--Nikod\'ym derivative and dividing by $(\mu(B(t,r)))^{\frac{1}{q}}$, we arrive to the following relation
$$
 \Bigg(\frac{1}{\mu(B(\tau,r))}\int_{F \cap S \times B(\tau,r)}\frac{\|P(s, t)u(t)\|^{q}_{s}}{\|u(t)\|^q_t} J(s,t) \, d\mu \times d\nu(t,s) \Bigg)^{\frac{1}{q}} 
\leq \Bigg(\frac{\Phi(B(\tau,r))}{\mu(B(\tau,r))}\Bigg)^{\frac{1}{\kappa}}.  
$$

By the Fubini theorem, 

\begin{multline*}
\int_{S} \Bigg( \frac{1}{\mu(B(\tau,r))} \int_{B(\tau,r)} \chi_{(F \cap S \times B(t,r))} (s,t) \frac{\|P(s, t)u(t)\|^{q}_{s}}{\|u(t)\|^q_t} J(s,t) \, d\mu(t) \Bigg) d\nu(s)\\
\leq \Bigg(\frac{\Phi(B(\tau,r))}{\mu(B(\tau,r))}\Bigg)^{\frac{q}{\kappa}}.  
\end{multline*}

With $r$ tending to $0$ and with help of the Lebesgue theorem, Fatou's lemma, and Proposition \ref{prop:setderivative} we conclude that

$$
\int_{F_\tau} \frac{\|P(s, \tau)u(\tau)\|^{q}_{s}}{\|u(\tau)\|^q_\tau} J(s,\tau) \, d\nu(s)  
\leq \Phi'(\tau) \quad \text{a. e. } \tau \in T.
$$

\begin{figure}[h]
\centering	
\includegraphics[width=0.6\linewidth]{./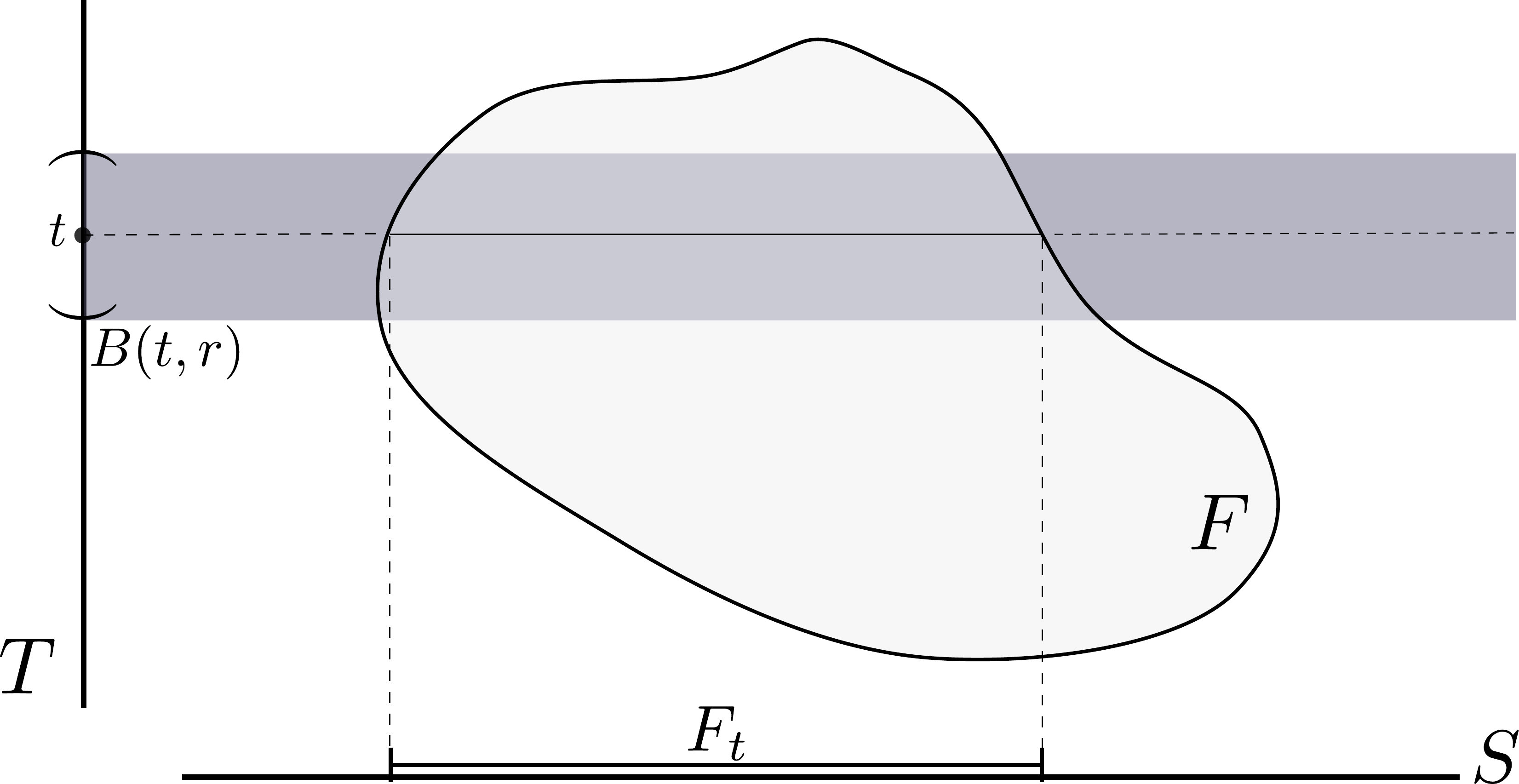}
\caption{Schematic relative arrangement of $B(t,r)$ and $F_t$} \label{fig:3}
\end{figure}

Integrate over $T$ 
$$
\int_{T} \bigg( \int_{F_t} \frac{\|P(s, t)u(t)\|^{q}_{s}}{\|u(t)\|^q_t} J(s,t) \, d\nu(s) \bigg)^{\frac{\kappa}{q}} d\mu(t)  
\leq \Phi(T).
$$

From Proposition \ref{prop:supBFS} and Lemma \ref{lemma:NsupK} we derive 
$$
N(P)(s,t) J^{\frac{1}{q}}(s,t) \in L^{\kappa, q}(F).
$$



Now let $N(P)(s,t)\cdot J^{\frac{1}{q}}(s,t) \in L^{\kappa, q}(F)$. 
As $\|P(s,t)\|_{\mathcal B(W_t,V_s)} <\infty$ almost everywhere on $F$ we have
\begin{multline*}
\|M_{F} f \|_{L^q(F, \{\widetilde{V}_{(s,t)}\})} = \bigg(\int_F \|P(s, t)[f(t)]\|^q_s\, d\lambda(s,t) \bigg)^{\frac{1}{q}} \\
\leq \bigg(\int_F (N(\{P(s,t)\}, s,t) \|f(t)\|_t)^q \, d\lambda(s,t) \bigg)^{\frac{1}{q}} \\
= \bigg( \int_F (N(\{P(s,t)\}, s,t) \|f(t)\|_t)^q J(s,t) \, d\nu \times d\mu(s,t) \bigg)^{\frac{1}{q}}.
\end{multline*}
Using the Fubini theorem and applying H\"older's inequality to the first integral we obtain
\begin{multline*}
\bigg( \int_T \|f(t)\|^q_t \bigg( \int_S \chi_F(s,t) N(\{P(s,t)\}, s,t)^q J(s,t) \, d\nu(s) \bigg) \, d\mu(t) \bigg)^{\frac{1}{q}} \\
\leq \bigg( \int_T \bigg( \int_S \chi_F(s,t) N(\{P(s,t)\}, s,t)^q J(s,t) \, d\nu(s) \bigg)^{\frac{\kappa}{q}} \, d\mu(t) \bigg)^{\frac{1}{\kappa}} \bigg( \int_T \|f(t)\|^p_t \bigg)^{\frac{1}{p}} \\
= \bigg \| N(\{P(s,t)\}, s,t) \cdot J^{\frac{1}{q}}(s,t) \bigg \|_{L^{\kappa, q}(F)} \|f\|_{L^p(T, \{W_t\})}.
\end{multline*}

The case $p = q$ is much simpler, so we will not prove it separately in further results.

For the necessary part we similarly obtain 
$$
\esssup\limits_{T} \int_{S_t} N(\{P(s,t)\}, s,t)^{q} J(s,t) \, d\nu(s) <\infty. 
$$  
For the sufficiency part we do not need to use H\"older's inequality. 

Thus, the proof of the theorem is completed.
\end{proof}

\subsection{Case of neglectable $F$}\label{subsection:graph}

In the next series of theorems, we examine some cases when measure $\lambda$ is not absolutely continuous with respect to $\mu\times\nu$, but it is absolutely continuous with respect to $\mu$. In such cases, we can use as $F$ sets of $\mu\times\nu$-zero measure, e.g. graphs of maps. 



Let us define a Borel measure 
$$
\lambda_P(B) =\int_{F\cap S\times B}\left(N(P)(s,t)\right)^q\,d\lambda ,
$$ for Borel sets $B\subset T$. 

\begin{thm}\label{thm:F2}
Let $\lambda_P$ be absolutely continuous with respect to $\mu$.
Then the operator $M_F:\left(\int_{T}^{\oplus}W_t\,d\mu\right)_{L^p} \to \left(\int_{F}^{\oplus} \widetilde{V}_{(s,t)} \,d\lambda\right)_{L^q}$, $p\geq q$,
is bounded if and only if
$$
J^{\frac{1}{q}}(t)\in L^{\kappa}(T),
$$
where $J(t) = \frac{d\lambda_P}{d\mu}$ is a Radon--Nikod\'ym derivative of measure $\lambda_P$ with respect to $\mu$.
\end{thm}

\begin{proof}
As in the proof of Theorem \ref{thm:F1}, we have the inequality 
$$
\Bigg(\int_{F \cap S \times B(\tau,r)}\frac{\|P(s, t)u(t)\|^{q}_{s}}{\|u(t)\|^q_t} \, d\lambda(s,t) \Bigg)^{\frac{1}{q}} 
\leq \Phi^{\frac{1}{\kappa}}(B(\tau,r))\mu^{\frac{1}{p}}(B(\tau,r)).  
$$
Passing to the lattice supremum (by Proposition \ref{prop:supBFS}), we obtain the same inequality with $N(P)(s,t)$.
Then, by the definition of the measure $\lambda_{P}$ we derive the inequality
$$
\Bigg(\int_{B(\tau,r)}J(t) \, d\mu(t) \Bigg)^{\frac{1}{q}} 
\leq \Phi^{\frac{1}{\kappa}}(B(\tau,r))\mu^{\frac{1}{p}}(B(\tau,r)).  
$$
Repeating the steps of the proof of Theorem \ref{thm:F1}, we obtain the required assertion.
\end{proof}



\section{Weighted composition operator}\label{section:WeightedCO}
Now we obtain the description of the composition operator on $L^p$-direct integrals (and as a consequence on $L^p$-spaces) by making use the construction of Section \ref{subsection:graph}.

We consider the composition operator with operator-valued weight 
induced by a measurable mapping $\psi:S\to T$ 
and a measurable family of bounded linear operators $\{Q(s)\}_{s \in S}$, $Q(s): W_{\psi(s)} \to V_s$ (see Definition \ref{def:operatorsQ}).  
$$
  \mathcal C_\psi: \left(\int_{T}^{\oplus}W_t\,d\mu\right)_{L^p} \to \left(\int_{S}^{\oplus}V_s\,d\nu\right)_{L^q}
$$
and acting by the rule
\begin{equation}\label{Cpsi}
  \mathcal C_\psi[f](s) = Q(s)[f(\psi(s))].
\end{equation}

As mentioned in the introduction we lift operator $\mathcal C_\psi$ to the graph of $\psi$. Then its lifting is the operator 
$M_{\Gamma_\psi}[f](s,t)=P(s,t)[f(t)]$,
where $P(s,t) = Q(s)$ for all $(s,t)\in\Gamma_{\psi}$. 

\begin{lem}\label{lemma:comp1}
For the measurable mapping $\psi:S\to T$ the family $P(s,t) := Q(s)$ for $(s,t)\in \Gamma_\psi$ is measurable 
and $N(P)(s,t) = N(Q)(s)$.
\end{lem}
\begin{proof}
Check Definition \ref{def:operatorsF} for family $\{P(s,t)\}_{(s,t)\in \Gamma_\psi}$.
The measurability holds due to measurability of family $\{\widetilde{V}_{(s,t)}\}_{(s,t)\in \Gamma_\psi}$.
The second property is obvious.

Next, we calculate random norm $N(P)(s,t)$.
For $(s,t)\in \Gamma_\psi$ we have
\begin{multline*}
N(P)(s,t) = \bigvee \left\{ \frac{\|P(s,t)w\|_s}{\|w\|_t} : w\in W\right\}
= \bigvee \left\{ \frac{\|P(s,t)w\|_s}{\|w\|_{\psi(s)}} : w\in W\right\}\\
= \bigvee \left\{ \frac{\|Q(s)w\|_s}{\|w\|_{\psi(s)}} : w\in W\right\}
= N(Q)(s).
\end{multline*}
\end{proof}

Let us additionaly assume that mapping $\psi:S\to T$ enjoys Luzin $\mathcal N^{-1}$-property. For a measurable set $A\subset\Gamma_\psi$ define 
its projection 
$\pi_S(A) = \{s\in S : (s,\psi(s))\in A\}$, which is again measurable. Further, one define measure $\lambda$ on $\Gamma_\psi$ as $\lambda(A) = \nu(\pi_S(A))$.

\begin{lem}\label{lemma:comp2}
Let mapping $\psi:S\to T$ enjoy Luzin $\mathcal N^{-1}$-property.
Then the operator 
$\mathcal C_\psi:\left(\int_{T}^{\oplus}W_t\,d\mu\right)_{L^p} \to \left(\int_{S}^{\oplus}V_s\,d\nu\right)_{L^q}$, $p\geq q$,
is bounded if and only if
operator 
$$
M_{\Gamma_\psi}:\left(\int_{T}^{\oplus}W_t\,d\mu\right)_{L^p} \to \left(\int_{\Gamma_\psi}^{\oplus} \widetilde{V}_{(s,t)} \,d\lambda\right)_{L^q}. 
$$
is bounded. 
Here $M_{\Gamma_\psi}$ is induced by the measurable family 
$P(s,t) = Q(s)$ for $(s,t)\in \Gamma_\psi$.
\end{lem}
\begin{proof}
Check that the operator $E:\left(\int_{S}^{\oplus}V_s\,d\nu\right)_{L^q} \to \left(\int_{\Gamma_\psi}^{\oplus} \widetilde{V}_{(s,t)} \,d\lambda\right)_{L^q}$, 
acting by the rule $E[f](s, t) = f(s)$ is an isomorphism. 
Indeed, for this case, a family of operators $P(s,t)$, associated with $E$, is a family of identical operators $Id : V_s \to V_s$. Hence, $\frac{d\lambda_{Id}}{d\nu} = 1$ for such family. 
Then, due to Theorem \ref{thm:F2}, operator $E$ is bounded.
From the other side, $E$ is a bijection, hence, according to the bounded inverse theorem 
$E^{-1}g(s) = g(s,\psi(s))$ also bounded. 

Now, we prove that  operators $E\mathcal C_\psi$ and $M_{\Gamma_\psi}$ coincide.
For $(s,t)\in \Gamma_\psi$ have
\begin{multline*}
E\mathcal C_\psi[f](s,t) 
= E[Q(\cdot)f(\psi(\cdot))](s,t) 
= E[P(\cdot, t)f(\psi(\cdot))](s,t)\\
= P(s,t)[f(\psi(s))] 
= P(s,t)[f(t)] 
= M_{\Gamma_\psi}[f](s,t).
\end{multline*} 
\end{proof}


\begin{thm}\label{thm:comp1}
Let measure $(N^q(Q)\cdot\nu)\circ\psi^{-1}$ be absolutely continuous with respect to $\mu$.
Then the operator 
$\mathcal C_\psi:\left(\int_{T}^{\oplus}W_t\,d\mu\right)_{L^p} \to \left(\int_{S}^{\oplus}V_s\,d\nu\right)_{L^q}$, $p\geq q$,
is bounded if and only if
\begin{equation}\label{trm:comp1:eq1}
J^{\frac{1}{q}}(t)\in L^{\kappa}(T).
\end{equation}
where $J(t)$ is a Radon--Nikod\'ym derivative of measure 
$(N(Q)\cdot\nu)\circ\psi^{-1}$ with respect to $\mu$.
\end{thm}
\begin{proof}
Thanks to Lemma \ref{lemma:comp2}, we should only investigate the boundedness of
operator $M_{\Gamma_\psi}$. 
By Lemma \ref{lemma:comp1} 
\begin{multline*}
\lambda_P(B) =\int_{F\cap S\times B}\left(N(P)(s,t)\right)^q\,d\lambda
= \int_{\psi^{-1}(B)}\left(N(Q)(s)\right)^q\,d\nu\\ = N^q(Q)\cdot\nu(\psi^{-1}(B)).
\end{multline*}
Then by Theorem \ref{thm:F2}, the boundedness of $M_{\Gamma_\psi}$ is equivalent to 
$$
\left(\frac{d\lambda_P}{d\mu}\right)^{\frac{1}{q}}\in L^{\kappa}(T).
$$
Thus, we obtain \eqref{trm:comp1:eq1},
and the theorem is proved.
\end{proof}

We can give a more explisit form of $J(t)$ in \eqref{trm:comp1:eq1} if $\psi:S\to T$ enjoys some additional conditions. Namely
\begin{cor}\label{cor:comp1}
Let all assumptions of Theorem \ref{trm:comp1:eq1} hold true.
Suppose that $\psi:S\to T$ is an injective mapping and enjoys Luzin $\mathcal N$-property.
Then the operator 
$\mathcal C_\psi:\left(\int_{T}^{\oplus}W_t\,d\mu\right)_{L^p} \to \left(\int_{S}^{\oplus}V_s\,d\nu\right)_{L^q}$, $p\geq q$,
is bounded if and only if
\begin{equation}\label{cor:comp1:eq1}
N(Q)(\psi^{-1}(t)) J^{\frac{1}{q}}_{\psi^{-1}}(t)\in L^{\kappa}(T).
\end{equation}
where $J_{\psi^{-1}}(t)$ is a volume derivative of the inverse mapping.
\end{cor}
\begin{proof}
For $B(t_0, r) \subset T$, let us consider the measure $N^q(Q)\cdot\nu(\psi^{-1}(B(t_0, r)))$. Using change of variable formula \eqref{formula_change} we obtain
\begin{multline*}
N^q(Q)\cdot\nu(\psi^{-1}(B(t_0, r))) = \int_{\psi^{-1}(B(t_0, r))}\left(N(Q)(s)\right)^q\,d\nu \\
= \int_{B(t_0, r)} \left(N(Q)(\psi^{-1}(t))\right)^q J_{\psi^{-1}}(t) \,d\mu
\end{multline*}

From this we conclude that measure $(N^q(Q)\cdot\nu)\circ\psi^{-1}$ is absolutely continuous with respect to $\mu$ and we can apply Theorem \ref{thm:comp1}. Passing to the limit while $r \to 0$ we infer
\begin{multline*}
\left( \frac{d(N^q(Q)\cdot\nu)\circ\psi^{-1}}{d\mu} \right)^{\frac{1}{q}} = \left( \lim\limits_{r \to 0} \frac{\int_{B(t_0, r)} \left(N(Q)(\psi^{-1}(t))\right)^q J_{\psi^{-1}}(t) \,d\mu}{\mu(B(t_0, r))} \right)^{\frac{1}{q}} \\
= N(Q)(\psi^{-1}(t)) J^{\frac{1}{q}}_{\psi^{-1}}(t).
\end{multline*}
Therefore, \eqref{trm:comp1:eq1} takes the form \eqref{cor:comp1:eq1}
\end{proof}

Similarly, as another corollary of Theorem \ref{thm:comp1}, one can easily derive Theorem \ref{theorem:operatorQ}, which was stated in Section \ref{section:Decomposable}
\begin{proof}[Proof of Theorem \ref{theorem:operatorQ}]
Under the condition of the theorem the mapping $\psi: \Omega \to \Omega$ is an identity mapping. Also note that the function $N(Q)(\omega)$ is measurable as a lattice supremum. 
Then, by the Radon--Nikod\'ym theorem, for arbitrary $B(\omega_0, r) \subset \Omega$,
\begin{multline*}
N^q(Q)\cdot\nu(B(\omega_0, r)) = \int_{B(\omega_0, r)}\left(N(Q)(\omega)\right)^q\,d\nu \\
= \int_{B(\omega_0, r)} \left(N(Q)(\omega)\right)^q \frac{d\nu}{d\mu} \,d\mu.
\end{multline*}
Again, applying Theorem \ref{thm:comp1} and passing to the limit while $r \to 0$ completes the proof of the theorem.
\end{proof}

Under additional conditions on the behavior of the norms of the operators $Q(s)$ we can refuse the requirement of the injectivity of the mapping $\psi$.

\begin{thm}\label{thm:comp2}
Let measure $\nu\circ\psi^{-1}$ be absolutely continuous with respect to $\mu$ and 
$N(Q)$ be bounded above and below ($c \leq N(Q)\leq C$). 
Then the operator 
$\mathcal C_\psi:\left(\int_{T}^{\oplus}W_t\,d\mu\right)_{L^p} \to \left(\int_{S}^{\oplus}V_s\,d\nu\right)_{L^q}$, $p\geq q$,
is bounded if and only if
\begin{equation}\label{trm:comp2:eq1}
J_{\psi^{-1}}^{\frac{1}{q}}(t)\in L^{\kappa}(T),
\end{equation}
where $J_{\psi^{-1}}(t)$ is a volume derivative of the inverse mapping.
\end{thm}

\begin{proof}
Thanks to Lemma \ref{lemma:comp2}, we should only investigate the boundedness of
operator $M_{\Gamma_\psi}$.
As in the proof of Theorem \ref{thm:F1}, 
for a ball $B(\tau,r)\subset T$ we have the inequality
$$
\Bigg(\int_{\Gamma_{\psi} \cap S \times B(\tau,r)}\frac{\|P(s, t)u(t)\|^{q}_{s}}{\|u(t)\|^q_t} \, d\lambda(s,t) \Bigg)^{\frac{1}{q}} 
\leq \Phi^{\frac{1}{\kappa}}(B(\tau,r))\mu^{\frac{1}{p}}(B(\tau,r)).
$$
Further, passing to the lattice supremum (by Proposition \ref{prop:supBFS}), in left-hand-side we obtain $N(P)(s,t) = N(Q)(s)$.
Then, due to the definition of $\lambda_{T}$ and boundedness from below of $N(Q)$ we deduce the inequality
$$
c(\lambda_T(B(\tau,r)))^{\frac{1}{q}} 
\leq \Phi^{\frac{1}{\kappa}}(B(\tau,r))\mu^{\frac{1}{p}}(B(\tau,r)).  
$$
Differentiating this inequality as in Theorem \ref{thm:F1}, we obtain
$$
c(J_{\psi^{-1}}(t))^{\frac{\kappa}{q}} \leq \Phi'(t), \quad \text{ a.e. } t\in T,
$$ 
i.e,
\begin{equation}\label{trm:comp2:eq2}
J^{\frac{1}{q}}_{\psi^{-1}} \in L^{\kappa}(T).
\end{equation}

Now, let the inequality \eqref{trm:comp2:eq2} be satisfied. 
Then, using the upper boundedness of the norms of operators and the H\"older inequality, we derive
\begin{multline*}
\|M_{\Gamma_{\psi}} f \|_{L^q(\Gamma_{\psi}, \{\widetilde{V}_{(s,t)}\})} 
= \bigg(\int_{\Gamma_{\psi}} \|P(s, t)[f(t)]\|^q_s\, d\lambda(s,t) \bigg)^{\frac{1}{q}} \\
\leq \bigg(\int_{\Gamma_{\psi}} (N(P)(s,t)\|f(t)\|)^q_s\, d\lambda(s,t) \bigg)^{\frac{1}{q}}
\leq C\bigg(\int_T \|f(t)\|_t^q \, d\lambda_T(t) \bigg)^{\frac{1}{q}} \\
= C\bigg(\int_T \|f(t)\|_t^q J_{\psi^{-1}}(t)\, d\mu(t) \bigg)^{\frac{1}{q}}
\leq C\bigg(\int_T J^{\frac{\kappa}{q}}_{\psi^{-1}}(t)\, d\mu(t) \bigg)^{\frac{1}{\kappa}} \bigg(\int_T \|f(t)\|_t^p \, d\mu(t) \bigg)^{\frac{1}{p}}.
\end{multline*}

\end{proof}

As a corollary of the previous theorem we can obtain the result for Lebesgue--Bochner spaces. We also note that the resulting boundedness condition \eqref{trm:comp2:eq1} coincides with known results for standard Lebesgue spaces (see, for example, \cite[Theorem 4.]{VU04}).

\begin{cor}
Let measure $\nu\circ\psi^{-1}$ be absolutely continuous with respect to $\mu$ and $B$ be a Banach space.
Then the operator 
$\mathcal C_\psi:L^p(T, B) \to L^q(S, B)$, $p\geq q$,
is bounded if and only if
\begin{equation}
J_{\psi^{-1}}^{\frac{1}{q}}(t)\in L^{\kappa}(T),
\end{equation}
where $J_{\psi^{-1}}(t)$ is a volume derivative of the inverse mapping.
\end{cor}
In the case of Lebesgue--Bochner instead $L^p$-direct integrals, a family of operators $\{Q(s)\}$ is reduced to the identical operator $I: B \to B$ and conditions of Theorem \ref{thm:comp1} hold true.

\section{Composition operator on mixed-norm spaces}\label{section:mixed}

In this section, we apply the obtained results to the composition operator $C_\varphi f = f\circ\varphi$, which acts on the mixed-norm spaces.
Theorem \ref{thm:comp1} allows us to study those operators only in the case of mappings of a special form,
that preserve the priority of the variables.

As in precious sections, let $S$, $X$, $T$, $Y$ be homogeneous spaces, $U \subset S\times X$ and $U'\subset T\times Y$.
We say that mapping $\varphi: U \to U'$ is preserving the priority of the variables if $\varphi(s,x) = (\psi(s), \xi(s,x))$
and  $\psi$ enjoys Luzin $\mathcal N^{-1}$-property and $\xi(s,\cdot)$ enjoys Luzin $\mathcal N^{-1}$-property for almost all $s\in S$.
\begin{figure}[h]
\centering	
\includegraphics[width=0.8\linewidth]{./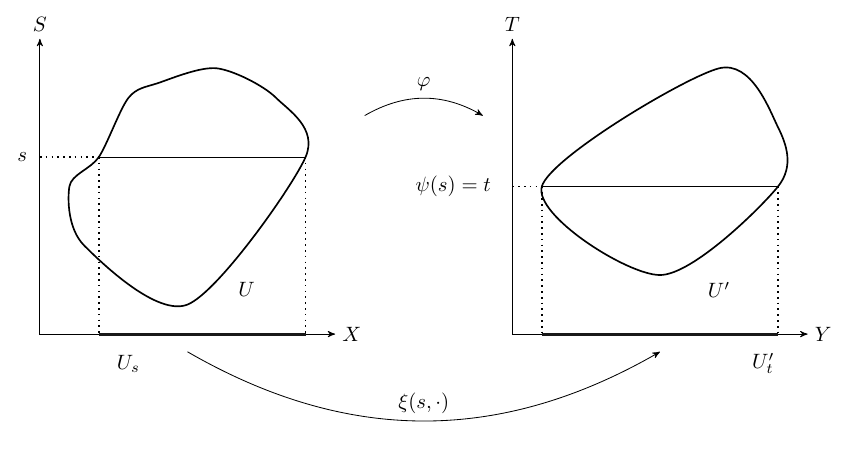}
\caption{Mappings preserving the priority of the variables} \label{fig:6}
\end{figure}

Then the composition operator takes the following form:
\begin{equation}\label{Cphi}
C_\varphi f(s,\cdot) = f(\psi(s),\xi(s,\cdot)) =  Q(s)[f(\psi(s),\cdot)] = \mathcal C_{\psi}[f](s),  
\end{equation}
where $Q(s)$ are composition operators induced by $\xi(s,\cdot):U_s\to U'_{\psi(s)}$.
Because of the construction of the operator $\mathcal C_{\psi}$ (see \eqref{Cpsi}), partial operators $Q(s):L^\beta(U'_{\psi(s)})\to L^\alpha(U_s)$ are assumed to be bounded, this implies additional restrictions on mappings $\xi(s,\cdot)$.

Note, that due to $\mathcal N^{-1}$-property,
for almost every $s\in S$ there exists the volume derivative of the inverse mapping, which is a measurable function: 
\[
J_{\xi^{-1}(s,\cdot)}:U'_{\psi(s)}\to \mathbb R. 
\]

\subsection{The case of Lebesgue spaces}
In \cite{EM2020,EM2019,EM18} we investigated  boundedness of operators on the mixed-norm Lebesgue space, in particular, composition operator and multiplication operator.
Now we can obtain some of our previous results as corollaries from Theorem \ref{thm:comp1} of the previous section.

Consider the question when $\varphi: U \to U'$ induces a bounded composition operator
$$
C_\varphi:L^{p,\beta}(U') \to L^{q,\alpha}(U), \quad \text{ by the rule } C_\varphi f = f\circ\varphi.  
$$

As a direct consequence of Theorem \ref{thm:comp1}, taking into account \eqref{Cphi}, we obtain the next theorem.
\begin{thm}
Let measure $(N^q(Q)\cdot\nu)\circ\psi^{-1}$ be absolutely continuous with respect to $\mu$ and mapping $\varphi: U \to U'$ be such that
\begin{enumerate}
\item[(A1)] $\varphi(s,x) = (\psi(s), \xi(s,x))$ preserves the priority of the variables,
\item[(A2)] $\|J^{\frac{1}{\alpha}}_{\xi^{-1}(s,\cdot)}\|_{L^\gamma(U'_{\psi(s)})} <\infty$ for a.e. $s\in S$ ($\gamma = \frac{\beta\alpha}{\beta-\alpha}$ if $\beta > \alpha$ and $\gamma = \infty$ if $\beta = \alpha$).
\end{enumerate}
Then $\varphi$ induces the bounded composition operator 
$C_\varphi:L^{p,\beta}(U') \to L^{q,\alpha}(U)$, $p\geq q$, $\beta \geq \alpha$,
is bounded if and only if
\begin{equation}
J^{\frac{1}{q}}(t)\in L^{\kappa}(T), \quad \kappa = \begin{cases} \frac{pq}{p-q}, & p>q, \\ \infty, & p=q, \end{cases}
\end{equation}
where $J(t)$ is a Radon--Nikod\'ym derivative of measure 
$(N(Q)\cdot\nu)\circ\psi^{-1}$ with respect to $\mu$, $\kappa = \frac{pq}{p-q}$ if $p > q$ and $\kappa = \infty$ if $p=q$.
Here $N(Q)(s) = \|J^{\frac{1}{\alpha}}_{\xi^{-1}(s,\cdot)}\|_{L^\gamma(U'_{\psi(s)})}$.
\end{thm}
\begin{proof}
By Lemma \ref{mixlem}, mixed-norm Lebesgue space $L^{q,\alpha}(U)$ can be represented as $L^q(S, \{L^\alpha(U_s)\}) = \left(\int_{S}^{\oplus}L^\alpha(U_s)\,d\nu \right)_{L^q}$.
Due to \cite[Theorem 4]{VU04} and (A2) operators $Q(s):L^\beta(U'_{\psi(s)})\to L^\alpha(U_s)$ are bounded composition operators induced by $\xi(s,\cdot)$ and 
$\|J^{\frac{1}{\alpha}}_{\xi^{-1}(s,\cdot)}\|_{L^\gamma(U'_{\psi(s)})}$ are they norms.
To apply Theorem \ref{thm:comp1}, we only should prove that for the case of composition operators 
$N(Q)(s) = \|J^{\frac{1}{\alpha}}_{\xi^{-1}(s,\cdot)}\|_{L^\gamma(U'_{\psi(s)})}$. Let us prove it by Definition \ref{defn:LatticeSup} of the lattice supremum. We know that 
$$
N(Q)(s):= \bigvee \left\{ \frac{\|Q(s)v(s)\|_{L^\alpha(U_s)}}{\|v(s)\|_{L^\beta(U'_{\psi(s)})}} : v \text{ is a measurable section of } \{L^\beta(U'_{\psi(s)})\} \right\}.
$$
Then Condition 1 of Definition \ref{defn:LatticeSup} is satisfied obviously. 

We prove condition 2 by contradiction. Suppose that there exists a measurable function $g(s)$ such that $\frac{\|Q(s)v(s)\|_{L^\alpha(U_s)}}{\|v(s)\|_{L^\beta(U'_{\psi(s)})}} \leq g(s)$ for all measurable section $v(s)$ of $\{L^\beta(U'_{\psi(s)})\}$, but 
$\|J^{\frac{1}{\alpha}}_{\xi^{-1}(s,\cdot)} \|_{L^\gamma(U'_{\psi(s)})} \geq g(s)$ on $s \in \Sigma$, $\nu(\Sigma)>0$.

For function $v(s) = J^{\frac{1}{\beta - \alpha}}_{\xi^{-1}(s,\cdot)}$ with the help of  H\"older's inequality (the case of equality), we obtain
\begin{multline*}
\|Q(s)v(s)\|_{L^\alpha(U_{s})} = \left( \int_{U_{s}} |v(s, \xi(s, x))|^\alpha \, dx \right)^{\frac{1}{\alpha}} \\
= \left( \int_{U'_{\psi(s)}} |v(s, y)|^\alpha J_{\xi^{-1}(s,\cdot)}(s,y) \, dy \right)^{\frac{1}{\alpha}} \\
= \left( \int_{U'_{\psi(s)}} |v(s, y)|^\beta \, dy \right)^{\frac{1}{\beta}} \left( \int_{U'_{\psi(s)}} |J_{\xi^{-1}(s,\cdot)}(y)|^{\frac{\beta}{\beta-\alpha}} \, dy \right)^{\frac{1}{\gamma}},
\end{multline*}
which contradicts our assumption. 
Then, $N(Q)(s) = \|J^{\frac{1}{\alpha}}_{\xi^{-1}(s,\cdot)}\|_{L^\gamma(U'_{\psi(s)})}$.
\end{proof}

If we demand additional condition on the mapping $\varphi: U \to U'$, the statement of the previous theorem can also be formulated in terms of mixed-norm spaces. 
\begin{thm}\label{theorem:mixedL}
Let $p\geq q$ and $\beta\geq\alpha$ and
let mapping $\varphi: U \to U'$ be such that
\begin{enumerate}
\item[(A1)] $\varphi(s,x) = (\psi(s), \xi(s,x))$ preserves the priority of the variables,
\item[(A2)] $\|J^{\frac{1}{\alpha}}_{\xi^{-1}(\psi^{-1}(t),\cdot)}\|_{L^\gamma(U'_{t})} <\infty$ for every $t\in \psi(S)$ ($\gamma = \frac{\beta\alpha}{\beta-\alpha}$ if $\beta > \alpha$ and $\gamma = \infty$ if $\beta = \alpha$),
\item[(A3)] $\psi:S\to T$ is injective and enjoys Luzin $\mathcal{N}$-property.
\end{enumerate}
Then $\varphi$ induces the bounded operator
$C_{\varphi}:L^{p,\beta}(U')\to L^{q,\alpha}(U)$, if and only if
\begin{equation}\label{eq1:theorem:boundedLpq}
J^{\frac{1}{q}}_{\psi^{-1}}(t)J^{\frac{1}{\alpha}}_{\xi^{-1}(\psi^{-1}(t),\cdot)}(y) \in L^{\kappa, \gamma}(U'), \quad \kappa = \begin{cases} \frac{pq}{p-q}, & p>q, \\ \infty, & p=q, \end{cases}
\end{equation}
\end{thm}

\begin{proof}
For the proof it is sufficient to note, that all conditions of Corollary \ref{cor:comp1} are fulfilled. 
Taking into account that 
$N(Q)(s) = \|J^{\frac{1}{\alpha}}_{\xi^{-1}(s,\cdot)} \|_{L^\gamma(U'_{\psi(s)})}$ and combining with \eqref{cor:comp1:eq1}, we obtain the claimed result \eqref{eq1:theorem:boundedLpq}.
\end{proof}


\subsection{The case of Sobolev spaces}

Here we discuss the composition operator on mixed-norm spaces
$$
L^q(S; W^{1,\alpha}(U_s)) = \left(\int_{S}^{\oplus}W^{1,\alpha}(U_s)\,d\nu \right)_{L^q},
$$
where $W^{1,\alpha}(U_s)$ are Sobolev spaces on $U_s$. Such spaces arise as solution spaces while solving the problem of transport of matter in a porous medium (see \cite{MB08}). 
The occurred physical processes are described by a system of two weakly connected parabolic equation: one of them describes the macroscopic level (medium as a whole), and the second one does the microscopic level (processes occurred in a single pore). The porous medium itself is modeled as a set $S$ (the macrolevel), and the distribution of domains $\{U_s\}_{s \in S}$ (the microlevel), see Fig. \ref{fig:7} below. 
The investigation of the composition operator on such spaces can be useful in studying of deformation of a porous medium.

\begin{figure}[h!]
\centering	
\includegraphics[width=0.8\linewidth]{./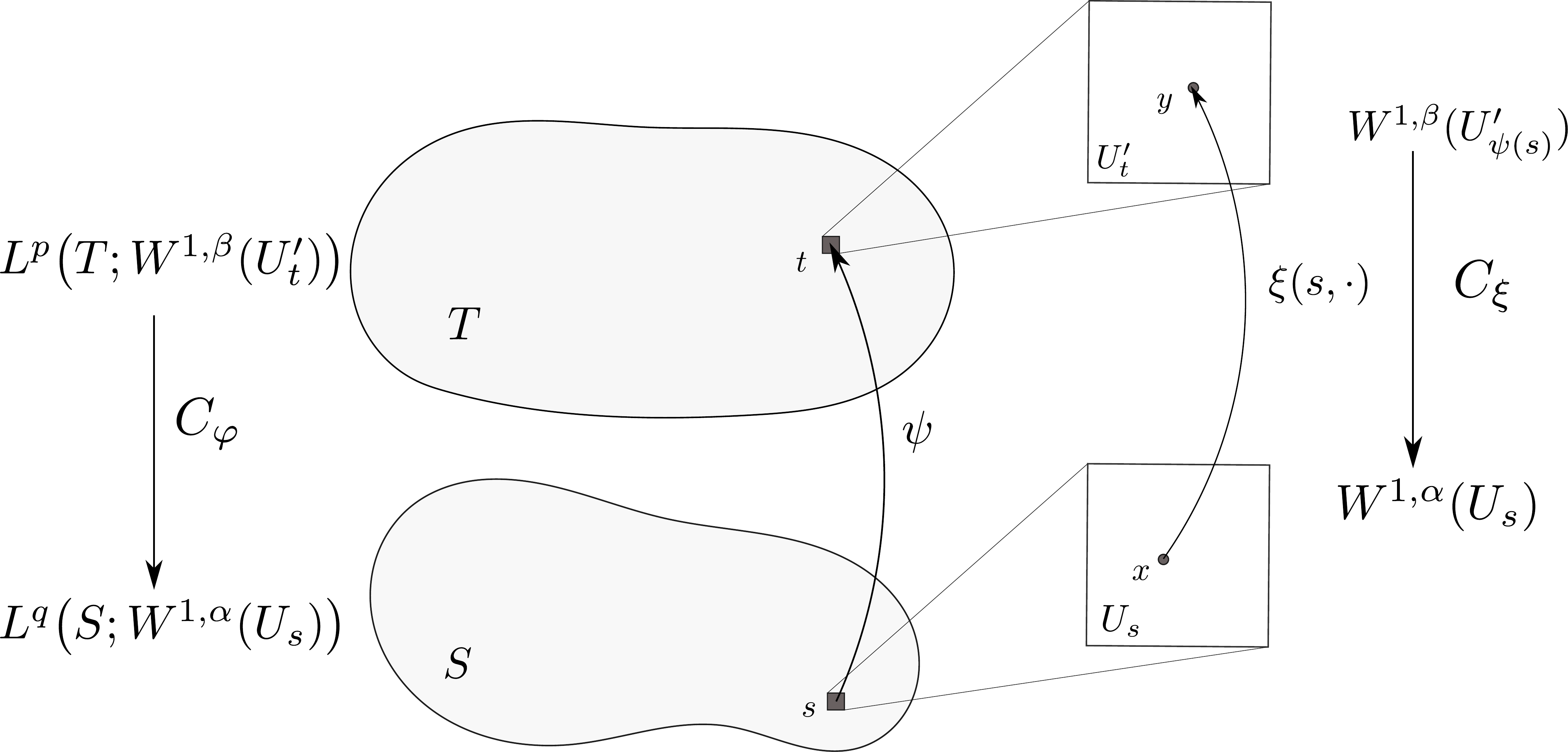}
\caption{Mappings between microstructure models} \label{fig:7}                                          
\end{figure}

As in the previous subsection we have to assume that $Q(s): W^{1,\beta}(U'_{\psi(s)}) \to W^{1,\alpha}(U_{s})$ are bounded composition operators induced by $\xi(s,\cdot)$. 
Note, that this requirement can be viewed as a requirement of preserving internal structure of the investigated medium. 
In the case of Sobolev spaces, the analog of condition (A2) from Theorem \ref{theorem:mixedL} is more complicated, and it is a content of the following theorem:
\begin{thm}[{\cite[Theorem 1.]{VU2002}}]
Let $\Omega$, $\Omega'$ be domains in $\mathbb{R}^n$.
A mapping $\varphi: \Omega \to \Omega'$ generates a bounded composition operator
$$
C_{\varphi}: W^{1,p}(\Omega') \to W^{1,q}(\Omega)
$$
if and only if the following conditions hold
\begin{enumerate}
\item $\varphi  \in ACL(\Omega)$ (absolutely continuous on almost all lines parallel to the coordinate axes);
\item $\varphi$ has a finite distortion ($D \varphi = 0$ a. e. on $\{x\in \Omega : J_{\varphi}(x) = 0\}$, 
$J_{\varphi}$ is the Jacobian determinant of $\varphi$);
\item the distortion function
$$
\mathcal{H}_{\varphi} (y) := \left( \sum_{x \in \varphi^{-1}(y) \setminus \Sigma_{\varphi}} \frac{|D \varphi(x)|^{q}}{|J_{\varphi}(x)|} \right)^{\frac{1}{q}} 
\in L^{\kappa}(\Omega'), \quad \kappa = \begin{cases} \frac{pq}{p-q}, & p>q, \\ \infty, & p=q, \end{cases}
$$
where $\Sigma_{\varphi}$ is a Borel set of measure zero.
\end{enumerate}
The norm of the operator $C_\varphi$ is equivalent to $\|\mathcal{H}_{\varphi}\|_{L^\kappa}(\Omega')$ .
\end{thm}

The class of mappings which satisfied this theorem is known as the mappings with bounded $(p,q)$-distortion. 
Now, let $S$, $T$ be domains in $\mathbb{R}^n$, and $X$, $Y$ be domains $\mathbb{R}^m$.

\begin{thm}
Let measure $(N^q(Q)\cdot\nu)\circ\psi^{-1}$ be absolutely continuous with respect to $\mu$ and mapping $\varphi: U \to U'$ be such that
\begin{enumerate}
\item[(B1)] $\varphi(s,x) = (\psi(s), \xi(s,x))$ preserves the priority of the variables,
\item[(B2)] $\xi(s, \cdot)$ is a mapping with bounded $(\beta, \alpha)$-distortion for every $s \in S$.
\end{enumerate}
Then $\varphi$ induces the bounded composition operator 
$C_{\varphi}:L^p(T, W^{1,\beta}(U'_t)) \to L^q(S, W^{1,\alpha}(U_s))$, $p\geq q$, $\beta\geq\alpha$,
is bounded if and only if
\begin{equation}
J^{\frac{1}{q}}(t)\in L^{\kappa}(T), \quad \kappa = \begin{cases} \frac{pq}{p-q}, & p>q, \\ \infty, & p=q. \end{cases}
\end{equation}
where $J(t)$ is a Radon--Nikod\'ym derivative of measure 
$(N(Q)\cdot\nu)\circ\psi^{-1}$ with respect to $\mu$.
Here $N(Q)(s) = \|\mathcal{H}_{\xi(s,\cdot)}\|_{L^\gamma(U'_{\psi(s)})}$, $\gamma = \frac{\beta\alpha}{\beta-\alpha}$ if $\beta > \alpha$ and $\gamma = \infty$ if $\beta = \alpha$
\end{thm}

\begin{thm}Let $p\geq q$ and $\beta\geq\alpha$ and
let mapping $\varphi: U \to U'$ be such that
\begin{enumerate}
\item[(B1)] $\varphi(s,x) = (\psi(s), \xi(s,x))$ preserves the priority of the variables,
\item[(B2)] $\xi(s, \cdot)$ is a mapping with bounded $(\beta, \alpha)$-distortion for every $s \in S$,
\item[(B3)] $\psi:S\to T$ is injective and enjoys Luzin $\mathcal{N}$-property.
\end{enumerate}
Then $\varphi$ induces the bounded operator
$C_{\varphi}:L^p(T, W^{1,\beta}(U'_t)) \to L^q(S, W^{1,\alpha}(U_s))$, if and only if
\begin{equation}
J^{\frac{1}{q}}_{\psi^{-1}}(t)\mathcal{H}_{\xi(\psi^{-1}(t),\cdot)}(y) \in L^{\kappa, \gamma}(U').
\end{equation}
\end{thm}

\bibliographystyle{plain}
\bibliography{direct}
\end{document}